\documentclass{amsart}
\usepackage{amssymb,latexsym}
\usepackage{amscd,amsthm}

\usepackage[all]{xy}
\usepackage{tikz}
\usetikzlibrary{matrix,arrows}
\usepackage{tikz-cd}

\newtheorem*{theorem*}{Theorem}
\newtheorem{theorem}{Theorem}[section]
\newtheorem{lemma}[theorem]{Lemma}

\newtheorem{proposition}[theorem]{Proposition}
\newtheorem{corollary}[theorem]{Corollary}

\theoremstyle{definition}
\newtheorem{definition}[theorem]{Definition}

\newtheorem*{remark}{Remark}
\newtheorem*{notation}{Notation}
\newtheorem{example}{Example}[subsection]

\DeclareMathOperator{\Ext}{Ext}
\DeclareMathOperator{\Hom}{Hom}

\DeclareMathOperator{\Ker}{Ker}

\newcommand{\cat}[1]{\mathcal{#1}}           

\newcommand{\class}[1]{\mathcal{#1}}   

\newcommand{\Z}{\mathbb{Z}}

\newcommand{\mathcolon}{\colon\,}

\newcommand{\ch}{\textnormal{Ch}(R)}
\newcommand{\cha}[1]{\textnormal{Ch}(\mathcal{#1})}
\newcommand{\rmod}{R\text{-Mod}}

\newcommand{\qcox}{\textnormal{Qco}(\mathbb{X})}

\newcommand{\exinj}{\textnormal{\footnotesize{ex}}{\class{I}}}
\newcommand{\toinj}{\textnormal{\footnotesize{to}}{\class{I}}}

\newcommand{\rightperp}[1]{#1^{\perp}}
\newcommand{\leftperp}[1]{{}^\perp #1}

\begin{document}

\title{Accessibility  and Gorenstein injective envelopes}

\author{Sergio Estrada}
 \address{S.E. \ Universidad de
  Murcia\\ Facultad de Matemáticas \\ Campus de Espinardo \\ Murcia 30100, Spain} \email[Sergio Estrada]{sestrada@um.es}
\urladdr{https://webs.um.es/sestrada/}
\author{James Gillespie}
\address{J.G. \ Ramapo College of New Jersey \\
         School of Theoretical and Applied Science \\
         505 Ramapo Valley Road \\
         Mahwah, NJ 07430\\ U.S.A.}
\email[Jim Gillespie]{jgillesp@ramapo.edu}
\urladdr{http://pages.ramapo.edu/~jgillesp/}

\date{\today}


\keywords{Grothendieck category, accessible category, Gorenstein injective object, Gorenstein injective envelope, Tate trivial generators, injective model structure}

\thanks{S.E. was partly supported by grant
  22004/PI/22 funded by Fundaci\'on S\'eneca, Agencia de Ciencia y
  Tecnolog\'ia de la Regi\'on de Murcia and by grant PID2024-155576NB-I00 funded by MICIU/ AEI/10.13039/501100011033/FEDER, UE.}

\subjclass[2020]{Primary 18E10,18C35,18N40,18G35. Secondary 18G25, 18G65.}

\begin{abstract}
Let $\class G$ be a Grothendieck category. We prove  completeness of the Gorenstein injective cotorsion pair whenever $\cat{G}$  admits a  set of Tate trivial generators, and show that having such generators is necessary for completeness.  In this case it must be a perfect cotorsion pair,   cogenerated by a set,  and equivalent to an injective abelian model structure on $\cat{G}$.
Examples include Grothendieck categories (possibly without enough projectives) that admit a generating set consisting of objects of finite projective dimension, such as the category of quasi-coherent sheaves on a quasi-compact and semi-separated scheme.  More generally, for a given set $\class S$, we characterize the completeness of the Gorenstein $\class B$-injective cotorsion pair, where $\class B=\class S^\perp$, in terms of the existence of a set of $\class B$-Tate trivial generators for $\class G$. The key ingredient to our proof is the fact that any class of the form $\class{B} :=\rightperp{\class{S}}$ is an accessibly embedded, accessible subcategory of $\cat{G}$. 
 The general approach allows for  further applications such as the existence of Ding injective envelopes and other relative Gorenstein injective envelopes without imposing additional assumptions on $\class G$.
\end{abstract}

\maketitle

\section{Introduction}
Let $R$ be an arbitrary ring and $\rmod$ the category of left $R$-modules. An $R$-module $M$ is said to be \emph{Gorenstein injective} if it is equal to some cycle module, $M=Z_nE$, of some exact complex of injectives $E$ having the property that  $\Hom_R(I,E)$ is again an exact  complex for all injective $R$-modules $I$. 
Along with the Gorenstein projective and Gorenstein flat modules, these are the fundamental objects of Gorenstein homological algebra,  a particular variant of relative homological algebra that has been actively researched.

One naturally wonders which results of classical homological algebra have   analogues in Gorenstein homological algebra.
The notion of a Gorenstein injective envelope of an $R$-module is the Gorenstein analogue of the classical notion of an injective envelope. While the existence of injective envelopes was established in 1953 in \cite{eckmann-schopf-injective}, the existence of Gorenstein injective envelopes in $\rmod$ was only recently proved by Šaroch and Šťovíček in~\cite{saroch-stovicek-singular-compactness}.

Module categories are the prototypical examples of Grothendieck categories, and it is well known (for instance, see \cite[Cor.~X.4.3]{stenstrom}) that injective envelopes also exist in any Grothendieck category. As pointed out in \cite{saroch-stovicek-singular-compactness}, their proof of the existence of Gorenstein injective envelopes can be extended to module categories over small preadditive categories, or equivalently, to Grothendieck categories with a generating set consisting of small projective generators. However, the existence of Gorenstein injective envelopes 
in a general Grothendieck category $\cat{G}$ has been an open problem. In the terminology that is typically used in Gorenstein homological algebra, we want to know when the class $\class{GI}$ of all Gorenstein injective objects in $\cat{G}$  is the right half of a perfect cotorsion pair, $(\leftperp{\class{GI}}, \class{GI})$. It means the classes are orthogonal  relative to $\Ext^1_{\cat{G}}(-,-)$, the class $\class{GI}$ is enveloping, and its left orthogonal class $\leftperp{\class{GI}}$ is covering.

The best result so far for general Grothendieck categories was obtained by Krause in \cite[Them.~7.12]{krause-stable-derived}, where he proved the existence of special Gorenstein injective preenvelopes for a locally noetherian Grothendieck category $\cat{G}$ whose derived category is compactly generated. In fact, Krause's proof readily shows that the pair of classes $(\leftperp{\class{GI}}, \class{GI})$ forms a complete cotorsion pair for such categories.  However, nothing has been   known without either the noetherian assumption or without the existence of small projective generators.

In this paper, as a by-product of our more general results, we provide in Corollaries~\ref{cor-Goren-inj} and \ref{cor-Goren-inj-cot} a very satisfactory answer to the above question. We show  that  $(\leftperp{\class{GI}}, \class{GI})$ is a complete (in fact, perfect)  cotorsion pair  in a Grothendieck category $\cat{G}$ if and only if $\cat{G}$ admits a set of \emph{Tate trivial} generators, in the sense of Definition~\ref{definition-B Tate trivial-generators}. This amounts to a  set $\class{U}$  of generators for $\cat{G}$  such that $\class{U}\subseteq \leftperp{\class{GI}}$. Such generating sets exist for  a wide class of Grothendieck categories, including those admitting a family of generators of finite projective dimension. We call such categories \emph{locally finite dimensional Grothendieck categories}; see Section \ref{sec-locally-finite-dim}. In particular, these include Grothendieck categories with a generating set of small projective generators, as well as Grothendieck categories possibly without enough projectives. A main  example is the category of quasi-coherent sheaves on a quasi-compact and semi-separated scheme $\mathbb{X}$ (see Example~\ref{qco}).

In fact, our techniques allow us to prove, simultaneously, and without any  further  assumptions on $\class G$,
the existence of envelopes for a number of relative Gorenstein objects  that have been considered in the literature. These include  Ding injective envelopes, Gorenstein AC-injective envelopes, and Gorenstein $\operatorname{FP}_n$-injective envelopes, to name a few; see \cite{gillespie-iacob-ding-injective-envelopes,bravo-gillespie-hovey, bravo-gillespie-perez-fpn}.  In all cases, we obtain improved general results that hold under the assumption that $\class G$ is a Grothendieck category with a set of generators analogous to the above (see Example~\ref{example-FP_n} for details.)

To state our main result, let
$\class{B} := \rightperp{\class{S}}$ be the right $\Ext$-orthogonal class to any given set $\class{S}$ of objects in $\cat{G}$. An object $M\in\cat{G}$  is said to be \emph{Gorenstein $\class{B}$-injective} if it is equal to some cycle object, $M=Z_nE$, of some exact complex of injective objects $E$ having the property that  $\Hom_{\cat{G}}(B,E)$ is an exact  complex of abelian groups for all $B\in\class{B}$. Let $\class{GI}_{\class{B}}$ denote the class of all Gorenstein $\class{B}$-injective objects.

\begin{theorem*}(Corollary~\ref{cor-Goren-inj-cot}) Let $\class G$ be a Grothendieck category with a set of generators of finite projective dimension, and let $\class S$ be any set of objects. Let $\class{B} := \rightperp{\class{S}}$.  Then $(\leftperp{\class{GI}_{\class{B}}}, \class{GI}_{\class{B}})$ is a  perfect cotorsion pair. That is,  the class $\leftperp{\class{GI}_{\class{B}}}$ is covering and the class $\class{GI}_{\class{B}}$ is enveloping.
\end{theorem*}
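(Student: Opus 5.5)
The plan is to obtain the pair $(\leftperp{\class{GI}_{\class{B}}}, \class{GI}_{\class{B}})$ as the right-hand cotorsion pair of an injective abelian model structure on $\cat{G}$, cogenerated by a set. The key input is that $\class{B} = \rightperp{\class{S}}$ is an accessible, accessibly embedded subcategory of $\cat{G}$. Hence there is a regular cardinal $\lambda$ such that every object of $\class{B}$ is a $\lambda$-directed colimit, computed in $\cat{G}$, of $\lambda$-presentable objects of $\class{B}$. Let $\class{B}_0$ be a set of representatives of these $\lambda$-presentable objects.

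In order, I would carry out the following steps.

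(1) Reduce exactness of $\Hom_{\cat{G}}(B,E)$ for all $B\in\class{B}$ to $B\in\class{B}_0$ together with the generators. For a complex of injectives $E$, the complex $\Hom(B,E)$ is exact iff $\Ext^1(B, Z_nE)=0$ for all $n$, i.e. iff $B$ lies in $\leftperp{}$ of the cycles. This left-orthogonal condition is not obviously closed under directed colimits. So I would rephrase it in the homotopy category $K(\mathrm{Inj})$ as $\Hom_{K}(\mathbf{i}B, E)=0$, where $\mathbf{i}B$ is an injective resolution. Then I would use the accessibility to write $\class{B}$ as closed under transfinite extensions of the set $\class{B}_0$ modulo the objects of finite projective dimension. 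Concretely, I would use that $\class{B}$ is deconstructible: $\class{B}$ is accessible and closed under directed colimits of monos as well as extensions, so by a Hill-lemma style argument each $B$ is $\class{B}_0$-filtered. Eklof's lemma then transfers the Ext-vanishing from $\class{B}_0$ to $\class{B}$.

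(2) Use the finite projective dimension generators $\class{U}$ to show they are $\class{B}$-Tate trivial, i.e. $\class{U}\subseteq\leftperp{\class{GI}_{\class{B}}}$. If $\operatorname{pd}U=d$, then $\Ext^1(U,Z_nE)\cong\Ext^{d+1}(U,Z_{n+d}E)=0$ by dimension shifting along the injective complex $E$.

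(3) Let $\class{T}=\class{B}_0\cup\class{U}\cup\{\text{syzygies}\}$. Following the injective model structure machinery (Gillespie's recipe), consider in $\textnormal{Ch}(\cat{G})$ the set of complexes $D^{n}(T)$ and the spheres $S^n(T)$ for $T\in\class{T}$. Take the cotorsion pair in $\textnormal{Ch}(\cat{G})$ cogenerated by this set; it is complete since it is cogenerated by a set in a Grothendieck category. Its right class consists of the exact complexes of injectives that are $\Hom(\class{B},-)$-exact. Since $\class{U}$ generates, a standard argument yields enough such complexes to produce special preenvelopes. Applying $Z_0$ gives special $\class{GI}_{\class{B}}$-preenvelopes, so the pair is complete and cogenerated by a set. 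Because $\class{GI}_{\class{B}}$ is closed under direct limits (which needs $\class{B}_0$ of $\lambda$-presentable objects and injectives closed under $\lambda$-directed colimits for large $\lambda$), perfectness follows by Enochs' theorem.

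I expect step (1) to be the main obstacle, namely showing that $\class{B}$ is generated by a \emph{set} so that the Hom-exactness test reduces to $\class{B}_0$; closure under direct limits in (3) is the secondary difficulty. For (1), I would first try the accessibility of $\class{B}$ plus Eklof's lemma.
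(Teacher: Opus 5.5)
\paragraph{Step (1).} This step fails as proposed, and it carries the whole proof. The class $\class{B}=\rightperp{\class{S}}$ is in general neither closed under directed unions nor deconstructible. In $R$-Mod with $R$ not noetherian, the Baer set $\class{S}=\{R/I\}$ gives $\class{B}=$ the injectives. These are not even closed under countable direct sums, so $\class{B}$ is not $\textnormal{Filt}(\class{B}_0)$ for any set $\class{B}_0$, and no Hill-lemma/Eklof argument is available. Accessibility only gives $\class{B}\subseteq\varinjlim\class{B}_0$, and Eklof's lemma says nothing about direct limits; the $\mathbf{i}B$ reformulation in $K(\mathrm{Inj})$ does not help either.

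The missing idea, which the paper uses, is to reverse the order of your steps. First build in $\cha{G}$ the cotorsion pair cogenerated by $\{D^n(U/C)\mid U\in\class{U},\ C\subseteq U\}\cup\{S^n(A)\mid A\in\class{B}_0\cup\class{U}\}$. The disks must be on the Baer test objects $U/C$, not on $\class{B}_0\cup\class{U}$, to force the right class to consist of complexes of injectives. Its left class $\class{W}$ is thick and contains all contractible complexes (in particular the disks $D^n(U)$ on generators). So this set-cogenerated pair is a complete injective cotorsion pair, and the paper uses (citing Gillespie) that the left class of such a pair is closed under direct limits. Since $S^n(\varinjlim B_i)=\varinjlim S^n(B_i)$ and $\Ext^1_{\cha{G}}(S^n(M),X)\cong H_{n-1}\Hom_{\cat{G}}(M,X)$ for complexes of injectives $X$, every complex in the right class is then automatically $\class{B}$-acyclic. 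Together with your step (2), the right class is exactly $\exinj_{\class{B}}$, and no decomposition of $\class{B}$ is needed.

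\paragraph{Step (3).} The end of this step puts the direct-limit hypothesis on the wrong class. Enochs' theorem (Xu's generator-for-$\Ext$ argument) needs the \emph{left} class $\leftperp{\class{GI}_{\class{B}}}$ to be closed under direct limits. $\class{GI}_{\class{B}}$ itself usually is not: over a non-noetherian ring of finite global dimension, $\class{GI}$ is just the injectives. Closure under $\lambda$-directed colimits would not suffice anyway. The correct input comes from the same fact as above. For exact $X$ one has $\Ext^1_{\cat{G}}(M,Z_0X)\cong\Ext^1_{\cha{G}}(S^0(M),X)$, so $\leftperp{\class{GI}_{\class{B}}}=\{M\mid S^0(M)\in\class{W}\}$. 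This is closed under direct limits because $\class{W}$ is and $S^0$ preserves them.

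There are two smaller gaps in your completeness argument.
\begin{itemize}
\item Applying $Z_0$ to $0\to S^0(M)\to X\to W\to 0$ gives a special $\class{GI}_{\class{B}}$-preenvelope only once you show $Z_0W\in\leftperp{\class{GI}_{\class{B}}}$. This can be done by comparing $W$ with $S^0(Z_0W)$ through a bounded-above exact complex and a bounded-below complex of injectives, both lying in $\class{W}$.
\item Without enough projectives, special precovers come from Salce's pushout argument, using an epimorphism from a coproduct of objects of $\class{U}\subseteq\leftperp{\class{GI}_{\class{B}}}$. This is where your step (2) is genuinely used.
\end{itemize}
The paper instead obtains completeness by deconstructing $\class{W}$ and restricting the filtration to spheres. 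It then identifies $\rightperp{\class{W}_{\cat{G}}}=\class{GI}_{\class{B}}$ using that $\class{W}_{\cat{G}}$ is thick and contains $\class{B}$.
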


It is well known in this field that there is a close  relationship between covers and envelopes, cotorsion pairs, and Quillen model structures on additive categories. Theorem~\ref{them-Goren-inj-cot} gives the model category interpretation of the above result. The theorem lists numerous desirable properties of the corresponding \emph{Gorenstein $\class{B}$-injective model structure} and its  homotopy category. For one, it sends all objects of $\class{B}$ to 0 in the homotopy category, and it is the best injective model structure on $\cat{G}$ with this property.

It is worth noting that our proofs are entirely different than the ones given   in~\cite{saroch-stovicek-singular-compactness} for $R$-modules.
In particular, the proofs of Theorem~\ref{them-Goren-inj-cot} and Corollary~\ref{cor-Goren-inj-cot} are based on two key results. The first concerns the  accessibility of right cotorsion classes, that is, classes of the form $\class{B} :=\rightperp{\class{S}}$ for a given set of objects $\class{S}$. By Theorem~\ref{them-rightperp-accessible}, any such class $\class{B}$ posesses a set $T$ for which each $B\in\class{B}$ is a direct limit of objects from $T$. This is the central observation of this work and the key new ingredient to our proofs. We prove  Theorem~\ref{them-rightperp-accessible}  in a way that makes it clear that it is a specialization of a general result by Ad\'ameck and Rosick\'y \cite[Prop.~4.7]{adamek-rosicky}. In their terminology,  $\class{B} :=\rightperp{\class{S}}$ is an \emph{accessibly embedded, accessible subcategory} of $\cat{G}$. This useful fact does not seem to have  been recognized in homological algebra, or at least not exploited to its fullest.   It allows us to go on and prove our main results using only standard tools from set-theoretic homological algebra.
In particular, we combine the accessibility result with our Theorem~\ref{them-A-acyclic-model}, which in turn goes back  to a result developed by Bravo, Gillespie and Hovey  in \cite[Them.~4.1]{bravo-gillespie-hovey}.

\subsubsection*{Notation, Terminology, and Preliminaries}
Given an additive category $\cat{A}$, we denote by $\cha{A}$ the category of chain complexes over $\cat{A}$. We use homological notation for complexes, meaning that our differentials lower degree. For an object $M\in\cat{A}$, we denote by $S^n(M)$ the chain complex consisting of $M$ in degree $n$ and 0 everywhere else. We call $S^n(M)$ the \emph{$n$-sphere} on $M$. We also have the \emph{$n$-disk} on $M$, denoted by $D^n(M)$. It is the complex with $M$ concentrated in degrees $n$ and $n-1$, with connecting differential $1_M$, and 0 elsewhere.

A standard reference for Gorenstein homological algebra in categories of modules over a ring is the book~\cite{enochs-jenda-book}. For basic results on Grothendieck categories we will refer to the book~\cite{stenstrom}. The cornerstone result of Theorem~\ref{them-rightperp-accessible} heavily relies on the theory of locally presentable and accessible categories as laid out in~\cite{adamek-rosicky}. The recent book~\cite{gillespie-book} will also be heavily used to reference results on cotorsion pairs and their relation to abelian model categories. We note now that what we call here an \emph{injective cotorsion pair}, denoted by say $(\class{W},\class{R})$, is equivalent to an \emph{injective model structure}, $(All, \class{W},\class{R})$. See~\cite[\S2.5 and~\S8.1]{gillespie-book}.

\section{Accessible categories and right cotorsion classes}\label{access-rightperp}

The theory of locally presentable and accessible categories is a convenient setting to obtain far reaching results in 
 homological algebra.
We start by recalling some of the basic terminology of the theory, following the standard reference~\cite{adamek-rosicky}.

Recall that a \emph{regular} cardinal is an infinite cardinal which is not the sum of a smaller
number of smaller cardinals. All infinite successor cardinals are regular.  But  $\aleph_{\omega} =
\sum_{n < \omega} \aleph_n$ is not a regular cardinal.

\begin{definition}
Let $\cat{K}$ be a category and $\lambda$ a regular cardinal.
By a $\lambda$-direct limit, denoted $\varinjlim M_i$,  we mean the colimit of a directed system $\{M_i, f_{ij}\}$ indexed by some $\lambda$-directed poset.   We say that $\cat{K}$ \emph{has $\lambda$-direct limits} if colimits of such diagrams always exist.
\begin{enumerate}
\item  An object $M\in\cat{K}$ is called \emph{$\lambda$-presentable} if $\Hom_{\cat{K}}(M,-)$ preserves $\lambda$-direct limits.
\item We say that $\class{K}$ is \emph{$\lambda$-accessible}  if it has $\lambda$-direct limits and possesses a set $T$ of $\lambda$-presentable objects such that every object of $\class{K}$ is a $\lambda$-direct limit of objects from $T$. By an \emph{accessible} category we mean one that is $\lambda$-accessible for some regular $\lambda$.
\item We say that $\class{K}$ is \emph{locally $\lambda$-presentable}  if it is cocomplete and $\lambda$-accessible. By a \emph{locally presentable} category we mean a cocomplete accessible category.
\end{enumerate}
\end{definition}

Any Grothendieck category is locally presentable~\cite[Prop.~3.10]{beke}.
In particular, the category $\rmod$ of all (say left) modules over a ring $R$ is a locally $\omega$-presentable category. Such a category is called a \emph{locally finitely presentable category}.  Similarly one speaks of a  \emph{finitely accessible category}. For example, by the Govorov-Lazard Theorem, the full subcategory of $\rmod$ consisting of all flat modules is finitely accessible.

Our  goal is to prove Theorem~\ref{them-rightperp-accessible}, which roughly says that the right Ext-orthogonal class to a set of objects in an accessible Quillen exact category must itself be an accessible full subcategory. The next few definitions and lemmas are meant to illuminate some of the language used to state the theorem.

\begin{definition}\cite[Def.~2.35]{adamek-rosicky}\label{def-accessibly-embedded}
Let $\cat{K}$ be an accessible category.
A  subcategory $\class{B}\subseteq \cat{K}$ is said to be   \emph{accessibly embedded}  if it is full and there exists a regular cardinal $\lambda$ such that $\class{B}$ is closed under $\lambda$-direct limits in $\cat{K}$. For any particular such $\lambda$, we will also say that $\class{B} \subseteq \cat{K}$ is  \emph{$\lambda$-accessibly embedded}.
\end{definition}
Note that a $\lambda$-accessibly embedded  $\cat{B} \subseteq \cat{K}$ has $\lambda$-direct limits  which are preserved by the inclusion functor. The argument for this easy fact appears below in the proof of Lemma~\ref{lemma-acc-embed}.
Also, any accessibly embedded $\cat{B}$ must be isomorphism closed and so is a \emph{strictly full}  subcategory of $\cat{K}$. 
 This follows from considering colimits of functors $\{0\} \xrightarrow{} \cat{K}$, because the one point set $\{0\}$ is trivially a $\lambda$-directed poset.

The next two lemmas do not seem to be explicitly stated in~\cite{adamek-rosicky}, though  they are  used there implicitly.

\begin{lemma}\label{lemma-acc-embed}
Let $\lambda$ be a regular cardinal. Assume $\cat{K}$ is a $\lambda$-accessible category, and   $\class{B} \subseteq \cat{K}$ is a strictly full subcategory with $\class{B}$ itself being $\lambda$-accessible. Then the following are equivalent:
\begin{enumerate}
\item $\class{B} \subseteq \cat{K}$ is $\lambda$-accessibly embedded. 
\item The inclusion $\cat{B} \hookrightarrow \cat{K}$  is a $\lambda$-accessible functor in the sense of~\cite[Def.~2.16]{adamek-rosicky}. Here, this just means it preserves $\lambda$-direct limits.
\end{enumerate}
When these conditions hold, we will say that  $\class{B} \subseteq \cat{K}$ is a \textbf{$\boldsymbol{\lambda}$-accessibly embedded, $\boldsymbol{\lambda}$-accessible subcategory}.  We say that $\class{B} \subseteq \cat{K}$  is an \textbf{accessibly embedded, accessible subcategory} if  such  a regular $\lambda$ exists.
\end{lemma}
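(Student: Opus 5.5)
We prove the two implications separately. Throughout, let $\iota\colon \class{B}\hookrightarrow \cat{K}$ denote the inclusion. Fix a $\lambda$-directed poset $I$ and a diagram $D\colon I\to\class{B}$, written $i\mapsto M_i$ with transition maps $f_{ij}$.

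(1)$\Rightarrow$(2). By hypothesis $\class{B}$ is closed under $\lambda$-direct limits in $\cat{K}$. Form the colimit $L:=\varinjlim \iota D$ in $\cat{K}$, which exists since $\cat{K}$ has $\lambda$-direct limits. Closure gives $L\in\class{B}$; here we use that $\class{B}$ is strictly full, so "closed under" makes sense for any choice of colimit object. The colimit cocone $(M_i\to L)$ then lives in $\class{B}$, and fullness of $\class{B}$ shows it is a colimit cocone there too. Indeed, any cocone $(M_i\to X)$ with $X\in\class{B}$ is also a cocone in $\cat{K}$. It therefore factors uniquely through $L$ by a morphism that lies in $\class{B}$, and uniqueness in $\class{B}$ follows from uniqueness in $\cat{K}$. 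Hence $\varinjlim_{\class{B}}D = L$, and $\iota$ sends this colimit to the colimit in $\cat{K}$, i.e.\ $\iota$ preserves $\lambda$-direct limits. This is the ``easy fact'' mentioned after Definition~\ref{def-accessibly-embedded}.

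(2)$\Rightarrow$(1). Here $\class{B}$ is itself $\lambda$-accessible, so it has $\lambda$-direct limits. Thus $D$ has a colimit $C$ in $\class{B}$ with cocone $(M_i\to C)$. By (2), $\iota$ carries this to a colimit cocone in $\cat{K}$, so $C\cong \varinjlim \iota D$ in $\cat{K}$. Any colimit of $\iota D$ formed in $\cat{K}$ is therefore isomorphic to $C\in\class{B}$. Since $\class{B}$ is strictly full, that colimit lies in $\class{B}$, so $\class{B}$ is closed under $\lambda$-direct limits in $\cat{K}$. Fullness is part of the hypothesis, so (1) holds.

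No step is a real obstacle. The only points needing care are the following:
\begin{itemize}
\item the direction (2)$\Rightarrow$(1) genuinely needs the existence of $\lambda$-direct limits in $\class{B}$, and this comes from the $\lambda$-accessibility of $\class{B}$;
\item strict fullness is what turns "isomorphic to an object of $\class{B}$" into "belongs to $\class{B}$".
\end{itemize}
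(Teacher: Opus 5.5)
Your proof is correct and follows the paper's argument step for step. For (1)$\Rightarrow$(2) you form the colimit in $\cat{K}$, use closure to place it in $\class{B}$, and use fullness to see that it is also the colimit in $\class{B}$; for (2)$\Rightarrow$(1) you take the colimit in $\class{B}$ (available by $\lambda$-accessibility), use that the inclusion preserves it, and use strict fullness to absorb isomorphic copies in $\cat{K}$.
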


For example, the full subcategory of $\rmod$ consisting of all flat $R$-modules is an $\omega$-accessibly embedded, $\omega$-accessible subcategory.  Note that if $\class{F}$ is any skeleton for the category of flat $R$-modules,  then the  inclusion $\class{F}  \hookrightarrow \rmod$ is still an accessible functor. But since  $\class{F}\subseteq\rmod$  is not strictly full, it  is also not accessibly embedded.

\begin{proof}
(1) $\impliedby$ (2).  Assume that the inclusion $\cat{B} \hookrightarrow \cat{K}$ is a $\lambda$-accessible functor.  Given any $\lambda$-directed system $\{B_i, f_{ij}\}$ of morphisms between objects of $\cat{B}$, we are asserting that any colimit for it in $\cat{K}$ is in fact in $\cat{B}$. But a colimit cone $(\varinjlim B_i, \{\eta_i\})$   does exist in $\cat{B}$, and by  hypothesis it also serves as a  colimit cone in $\cat{K}$. Any other colimit in $\cat{K}$ for $\{B_i, f_{ij}\}$ must be isomorphic to $\varinjlim B_i\in\cat{B}$. So the strictly full assumption implies that \emph{any}  colimit of  $\{B_i, f_{ij}\}$ in $\cat{K}$ must be in $\cat{B}$.

\noindent  (1) $\implies$ (2).  Assume $\class{B}$ is closed under $\lambda$-direct limits in $\cat{K}$. We are to show that the inclusion preserves $\lambda$-direct limits. So again let $\{B_i, f_{ij}\}$  be a $\lambda$-directed system of morphisms between objects in $\cat{B}$.  Let $(\varinjlim B_i, \{\eta_i\})$ be a  colimit cone for it, but in $\cat{K}$.
By hypothesis,  $\varinjlim B_i \in \cat{B}$. Since the subcategory is full, each  $\eta_i \colon B_i \xrightarrow{} \varinjlim B_i$ is also in $\class{B}$.  Likewise, for any  `imposter' cocone in $\cat{B}$,  the unique map $h \colon \varinjlim B_i \xrightarrow{} B$  in $\cat{K}$ satisfying the universal property must also be in $\cat{B}$, by the fullness. It follows that $(\varinjlim B_i, \{\eta_i\})$ is indeed the  colimit cone for the system $\{B_i, f_{ij}\}$, in $\cat{B}$, and that the inclusion preserves the colimit cone.
\end{proof}

The notion of a \emph{sharply greater} regular cardinal $\mu \vartriangleright \lambda$ is found in~\cite[Def.~2.12]{adamek-rosicky}. The point of them is that whenever $\cat{K}$ is $\lambda$-accessible then it is also $\mu$-accessible whenever $\mu$ is sharply greater than $\lambda$.

\begin{lemma}\label{lemma-raising-acc-index}
Let $\lambda$ be a regular cardinal.  Assume  that  $\class{B} \subseteq \cat{K}$ is a $\lambda$-accessibly embedded, $\lambda$-accessible subcategory.
\begin{enumerate}
\item  For all regular cardinals  $\lambda' \vartriangleright \lambda$, we again have that  $\class{B} \subseteq \cat{K}$ is a $\lambda'$-accessibly embedded, $\lambda'$-accessible subcategory.
\item  In addition, there exist (arbitrarily large)  regular cardinals  $\mu \vartriangleright \lambda$ such that the $\mu$-presentable objects internal to  the accessible category $\class{B}$ are precisely the objects of $\cat{B}$ that are $\mu$-presentable in the ambient accessible category $\cat{K}$.
\end{enumerate}
\end{lemma}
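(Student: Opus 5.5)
Both parts will follow from the standard raising-the-index results of Ad\'amek--Rosick\'y, combined with Lemma~\ref{lemma-acc-embed}.

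For (1), let $\lambda' \vartriangleright \lambda$ be regular. By \cite[Thm.~2.11]{adamek-rosicky}, since $\lambda \vartriangleleft \lambda'$, both $\cat{K}$ and $\class{B}$ are again $\lambda'$-accessible. For the embedding condition, the key fact is that $\lambda'$-directed posets are in particular $\lambda$-directed, because $\lambda \le \lambda'$. Hence any $\lambda'$-direct limit is a $\lambda$-direct limit. Since $\class{B}$ is closed under $\lambda$-direct limits in $\cat{K}$, it is closed under $\lambda'$-direct limits. Then Lemma~\ref{lemma-acc-embed} (now applied at $\lambda'$) identifies this with the inclusion being $\lambda'$-accessible. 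So (1) is essentially bookkeeping.

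For (2), I would invoke \cite[Thm.~2.19]{adamek-rosicky} (the uniformization theorem). It says that for any accessible functor $F$ between accessible categories there are arbitrarily large regular cardinals $\mu$ such that $F$ is $\mu$-accessible and preserves $\mu$-presentable objects. I apply it to the inclusion $F\colon \class{B}\hookrightarrow\cat{K}$, which is $\lambda$-accessible by Lemma~\ref{lemma-acc-embed}. If needed, I also arrange $\mu \vartriangleright \lambda$, since the theorem produces cardinals that are sharply above a given one and arbitrarily large. This already gives one inclusion: every $\mu$-presentable object of $\class{B}$ is $\mu$-presentable in $\cat{K}$.

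For the converse, let $B\in\class{B}$ be $\mu$-presentable in $\cat{K}$, and take any $\mu$-directed system $\{B_i\}$ in $\class{B}$. By part (1) applied to $\mu$, its colimit in $\class{B}$ is its colimit in $\cat{K}$. Fullness gives $\Hom_{\class{B}}(B,-)=\Hom_{\cat{K}}(B,-)$ on objects of $\class{B}$. So $\Hom_{\class{B}}(B,\varinjlim B_i)=\varinjlim \Hom_{\class{B}}(B,B_i)$, and $B$ is $\mu$-presentable in $\class{B}$. Therefore the two notions coincide.

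The main obstacle is the forward inclusion, that the inclusion preserves $\mu$-presentable objects. This does not hold for arbitrary $\mu$ and genuinely needs the uniformization theorem. If I wanted to avoid citing it, I would argue directly. Choose $\mu \vartriangleright \lambda$ larger than the presentability rank in $\cat{K}$ of every object of the generating set $T$ of $\lambda$-presentables of $\class{B}$. A $\mu$-presentable object of $\class{B}$ is then a retract of a $\mu$-small $\lambda$-directed colimit of objects of $T$, by \cite[Rem.~2.15]{adamek-rosicky}. Because $\mu$-small colimits of $\mu$-presentable objects in $\cat{K}$ remain $\mu$-presentable, and retracts do too, the inclusion follows. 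Making the size estimate precise is where care is required.
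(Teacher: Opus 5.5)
Your proposal is correct and follows essentially the same route as the paper: part (1) comes from Ad\'amek--Rosick\'y's raising-the-index theorem (you check closure under $\lambda'$-direct limits directly instead of citing their remark on accessible functors), and part (2) uses the uniformization theorem for one inclusion and the fullness/restriction-of-$\Hom$ argument for the other. Your optional direct argument via $\mu$-small $\lambda$-directed colimits of objects of $T$ is also sound, and it is essentially the proof of the uniformization theorem itself.
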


\begin{proof}
Statement (1) follows from~\cite[Them.~2.11(i)/Def.~2.12]{adamek-rosicky} and ~\cite[Remarks~2.18(2)]{adamek-rosicky}.

We prove (2).  We first observe that if $B\in\class{B}$ is  $\lambda$-presentable relative to  $\cat{K}$, then $B$ is also $\lambda$-presentable relative to the accessible category $\cat{B}$.   Indeed the former means that the functor $\Hom_{\cat{K}}(B, -)  \colon \cat{K} \xrightarrow{} \textbf{Set}$ preserves $\lambda$-direct limits.
  Since the restriction of this functor to $\cat{B}$ is exactly $\Hom_{\cat{B}}(B, -)  \colon \cat{B} \xrightarrow{} \textbf{Set}$, it follows from Lemma~\ref{lemma-acc-embed}(2) that $B$ is $\lambda$-presentable relative to $\cat{B}$.  So we have shown  $\lambda$-presentablility in $\cat{K}$ implies $\lambda$-presentability in  $\cat{B}$. So by combining  this with statement (1), we have that for any  regular cardinal  $\lambda' \vartriangleright \lambda$:  If $B\in\class{B}$ is  $\lambda'$-presentable relative to  $\cat{K}$, then $B$ is also $\lambda'$-presentable relative to   $\cat{B}$.
  On the other hand, it  follows from~\cite[Uniformization Theorem~2.19]{adamek-rosicky} that there  exist  (arbitrarily large)  regular cardinals $\mu \vartriangleright \lambda$ such that, conversely,  the inclusion functor takes $\mu$-presentable objects of $\cat{B}$ to $\mu$-presentable objects of $\cat{K}$. (The proof constructs a particular regular $\lambda' \vartriangleright \lambda$ such that this is the case for  any other regular $\mu \vartriangleright \lambda'$.)
\end{proof}

The following general statement is in the language of Quillen exact category structures in the sense of~\cite{quillen-algebraic K-theory} and~\cite{buhler-exact categories}. These are also discussed throughout~\cite{gillespie-book}, and in particular one can find the terminology of \emph{admissible generators}   in~\cite[\S9.5]{gillespie-book}.\footnote{We note  that any accessible additive category is  idempotent complete~\cite[Prop.~12.3(4)]{gillespie-book}.}

\begin{theorem}\label{them-rightperp-accessible}
Assume $\cat{K}$ is an accessible  additive category with coproducts and   $(\cat{K},\class{E})$ is a Quillen exact  structure possessing  a set of admissible generators.
Given any set of objects $\class{S}$,  set  $\class{B} := \rightperp{\class{S}}$. That is, $\cat{B}$ is the class of all objects $B \in \cat{K}$ such that $\Ext_{\class{E}}^1(S,B) = 0$ for all $S\in\class{S}$.  Then  the strictly full subcategory $\class{B}\subseteq \cat{K}$ is an accessibly embedded, accessible subcategory.  This implies the existence of (arbitrarily large) regular cardinals $\mu$ such that both of these hold:
\begin{enumerate}
\item $\class{B}$ and $\cat{K}$ are each $\mu$-accessible categories  and the  inclusion functor $\cat{B} \hookrightarrow \cat{K}$ preserves $\mu$-direct limits.
\item The internal $\mu$-presentable objects of the accessible category $\class{B}$ are precisely the objects of $\cat{B}$ that are $\mu$-presentable in the ambient accessible  category $\cat{K}$.
\end{enumerate}
In particular, $\cat{B}$ is closed under $\mu$-direct limits and there is a set  (not a proper class) of $\mu$-presentable objects $T\subseteq \cat{B}$ such that each $B\in\cat{B}$ is a $\mu$-direct limit of objects from $T$.
\end{theorem}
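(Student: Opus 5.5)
Since the paper explicitly frames this theorem as a specialization of \cite[Prop.~4.7]{adamek-rosicky}, the plan is to realize $\class{B}$ as a full subcategory of $\cat{K}$ cut out by an \emph{injectivity condition} with respect to a \emph{set} of morphisms. Proposition~4.7 of \cite{adamek-rosicky} says that, in an accessible category, the full subcategory of objects injective with respect to a set of morphisms is accessible and accessibly embedded. The first step is therefore to describe $\Ext^1_{\class{E}}(S,-)=0$ in this way. For each $S\in\class{S}$, use the admissible generators: a suitable coproduct of admissible generators is the domain $P_S$ of an admissible epimorphism $P_S\twoheadrightarrow S$. Let $K_S\rightarrowtail P_S$ be its kernel, an admissible monic; this exists because $\class{E}$-deflations have kernels. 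The short exact sequence $K_S\rightarrowtail P_S\twoheadrightarrow S$ then yields the exact sequence
\[\Hom(P_S,B)\to\Hom(K_S,B)\to\Ext^1_{\class{E}}(S,B)\to\Ext^1_{\class{E}}(P_S,B).\]

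The main obstacle is the last term, since $P_S$ need not be projective. So injectivity against $K_S\to P_S$ alone does not capture $\Ext^1(S,B)=0$. There are two ways I would handle this. \emph{First option:} describe $\Ext^1_{\class{E}}(S,B)=0$ directly as "every admissible monic $B\rightarrowtail X$ with cokernel $S$ splits". I would then show that all such extensions, up to equivalence, arise as pushouts of a single extension along maps, using the admissible generators and a cardinality bound. \emph{Second option, which I prefer:} avoid Ext-sequences and use the full strength of \cite[Prop.~4.7]{adamek-rosicky} (or \cite[Thm.~4.8]{adamek-rosicky}) in its form for \emph{cone-injectivity} or small-orthogonality classes. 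Concretely, $B\in\class{B}$ iff for every conflation $\epsilon\colon B\rightarrowtail X\twoheadrightarrow S$ the deflation splits. I would bound the size of $X$: choose $\lambda$ large so that $\cat{K}$ is $\lambda$-accessible and $S$, the generators, and the relevant pullbacks are $\lambda$-presentable. Then every conflation ending in $S$ is the pushout, along some $B'\to B$, of a conflation $B'\rightarrowtail X'\twoheadrightarrow S$ with $X'$ $\lambda$-presentable. This is proved by writing $X$ as a $\lambda$-direct limit and factoring $X\twoheadrightarrow S$ through a $\lambda$-presentable stage, then pulling back.

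Given such a bound, let $\mathcal{M}$ be the set of representatives of the morphisms $B'\to X'$ arising from these small conflations. Then $B\in\class{B}$ iff $B$ is injective with respect to $\mathcal{M}$. Indeed, a map $B'\to B$ extends along $B'\rightarrowtail X'$ exactly when the pushout conflation splits. Applying \cite[Prop.~4.7]{adamek-rosicky} shows that $\class{B}$ is an accessible, accessibly embedded, full subcategory; it is also strictly full because Ext-vanishing is isomorphism invariant. By Lemma~\ref{lemma-acc-embed} this is exactly the notion in the statement, for some regular $\lambda_0$.

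Finally, to obtain (1) and (2), apply Lemma~\ref{lemma-raising-acc-index}. Part~(2) of that lemma gives arbitrarily large regular $\mu\vartriangleright\lambda_0$ such that the $\mu$-presentable objects of $\class{B}$ are exactly those of $\class{B}$ that are $\mu$-presentable in $\cat{K}$. Part~(1) gives that $\class{B}$ is $\mu$-accessibly embedded and $\mu$-accessible, and that $\cat{K}$ is $\mu$-accessible since $\mu\vartriangleright\lambda_0$. The closing sentence of the theorem is then just the definition of $\mu$-accessibility of $\class{B}$ combined with closure under $\mu$-direct limits. I expect the size-bounding step (the reduction to a set $\mathcal{M}$) to be the hardest, and I would try first to carry it out via pullback along a $\lambda$-presentable stage of $X$.
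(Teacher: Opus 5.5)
Your architecture matches the paper's. You exhibit $\class{B}=\rightperp{\class{S}}$ as the class of objects injective with respect to a \emph{set} of morphisms, apply Ad\'amek--Rosick\'y Prop.~4.7, and finish with Lemma~\ref{lemma-raising-acc-index}. The paper does the same, except that it re-runs the proof of Prop.~4.7: closure under $\lambda$-direct limits and under $\lambda$-pure subobjects, then their Cor.~2.36. The gap is the one step carrying all the content, namely producing that set.

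Your size bound needs the following. Take a conflation $B\rightarrowtail X\overset{p}{\twoheadrightarrow}S$ and a presentation $X=\varinjlim X_i$ by $\lambda$-presentable objects. You need a stage $X_i\to X$ such that the composite $X_i\to X\to S$ is again a \emph{deflation}. (As written, ``factoring $X\twoheadrightarrow S$ through a stage'' points the wrong way, since stages map into $X$.)

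In a Grothendieck category with its abelian exact structure this works. The images of the $X_i$ form a $\lambda$-directed union of subobjects of $S$ whose union is $S$. So $\lambda$-presentability of $S$ forces one of them to equal $S$, and every epimorphism is a deflation.

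The theorem, however, concerns an arbitrary Quillen exact structure on an accessible additive category. There you have no images, and not every epimorphism is a deflation. Nothing in the hypotheses says that a deflation out of a $\lambda$-direct limit restricts to a deflation at some stage. Nor do they say that $\lambda$-direct limits of conflations are conflations.

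Choosing $\lambda$ so that ``the relevant pullbacks'' are $\lambda$-presentable does not help. Those pullbacks depend on the unbounded objects $X$ and $B$, so no uniform $\lambda$ is available. Notice also that in your preferred option the admissible generators play no real role. This signals a missing input: your own computation shows that pushouts of a presentation $K_S\rightarrowtail P_S\twoheadrightarrow S$ only reach $\Ker\bigl(\Ext^1_{\class{E}}(S,B)\to\Ext^1_{\class{E}}(P_S,B)\bigr)$.

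The paper does not attempt a size argument at this point; it imports exactly the needed input. It uses the generating-monics theory of Saor\'{\i}n--\v{S}\v{t}ov\'{\i}\v{c}ek, in the form of Prop.~9.29 of Gillespie's book; this is where the admissible generators are used. That result gives a set $I_{\class{S}}$ of inflations such that $Y\in\rightperp{\class{S}}$ if and only if $Y$ is injective with respect to $I_{\class{S}}$.

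With $I_{\class{S}}$ in place of your $\mathcal{M}$, the rest of your proposal goes through. Your translation between splitting of pushout conflations and extension along inflations is correct. The final cardinal-raising is also correct, provided you first pass to a single regular $\lambda_0$ at which $\cat{K}$ and $\class{B}$ are both $\lambda_0$-accessible and $\class{B}$ is closed under $\lambda_0$-direct limits. For instance, take $\lambda_0$ sharply greater than the indices produced by Prop.~4.7.

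As written, your argument proves the theorem only in the abelian (Grothendieck) case. That suffices for Corollary~\ref{cor-rightperp-accessible}. It does not cover the stated exact-category generality unless you supply or cite the generating-monics result.
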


Our initial  proof of Theorem~\ref{them-rightperp-accessible} was based on the purity argument of~\cite[Lemma~3]{krause-approximations and adjoints}, and then applying  the powerful~\cite[Cor.~2.36]{adamek-rosicky}. The latter says that if $\cat{K}$ is an accessible category and $\class{B}\subseteq \cat{K}$ is accessibly embedded, then $\class{B}$ is itself an accessible category if and only if there exists a regular cardinal $\alpha$ for which $\class{B}$ is closed,  in $\cat{K}$, under taking $\alpha$-pure subobjects.

\begin{proof}
This proof follows the one in~\cite[Prop.~4.7]{adamek-rosicky}, combined with  the existence of sets of generating monomorphisms as shown in~\cite{saorin-stovicek}. The latter result can also be found in~\cite[\S9.6]{gillespie-book}. Indeed by~\cite[Prop.~9.29]{gillespie-book} there exists a set  $I_{\class{S}} =\{k : K \rightarrowtail X\}$ of generating monics for $\class{B} := \rightperp{\class{S}}$. This means that $Y \in \class{B}$ if and only if for any $k \colon K \rightarrowtail X$ in $I_{\class{S}}$ and  any morphism  $f \colon K \xrightarrow{} Y$, there exists an extension of $f$ along $k$.  Now if we take any regular cardinal $\lambda$ such that the domain of each $k \in I_{\class{S}}$ is $\lambda$-presentable, then $\class{B} = \rightperp{\class{S}}$ is closed under $\lambda$-direct limits; see~\cite[Lemma~12.5]{gillespie-book}. In other words, the full subcategory $\cat{B} \subseteq \cat{K}$ is  $\lambda$-accessibly embedded in the sense of Definition~\ref{def-accessibly-embedded}.
Note that we are free to choose our regular $\lambda$ so that both the domain and the codomain of each $k \in I_{\class{S}}$ is $\lambda$-presentable and also so that $\cat{K}$ is  $\lambda$-accessible. Then again $\cat{B} \subseteq \cat{K}$ is  $\lambda$-accessibly embedded and we now can easily argue that $\cat{B}$ is closed under $\lambda$-pure subobjects (and this is analogous to the proof  of~\cite[Lemma~3]{krause-approximations and adjoints}).  Indeed assume $p \colon P \rightarrowtail B$ is a $\lambda$-pure (in $\cat{K}$) subobject of some $B\in \class{B}$. We wish to show $P \in \class{B}$, and for this it is enough to show that   for any $k \colon K \rightarrowtail X$ in $I_{\class{S}}$ and  any given morphism  $f \colon K \xrightarrow{} P$, there exists an extension of $f$ along $k$. But given such an $f$, since $B \in \class{B}$, we do have that the composite $pf$ admits such an extension, call it $g$. Diagrammatically, the solid arrows provide a commutative square in $\cat{K}$
$$\begin{tikzcd}[ampersand replacement=\&]
K \arrow[r, rightarrowtail, "k"] \arrow[d, "f" '] \& X \arrow[dl,  , dashed, "\exists" ']  \arrow[d,  "g"] \\
P  \arrow[r, rightarrowtail, "p"] \& B.
\end{tikzcd}$$
By the very definition of $p$ being $\lambda$-pure, a dashed arrow $X \xrightarrow{} P$ exists so that the upper left triangle commutes. That is, $f$ extends over $k$, proving   $P \in \class{B}$.
It  follows from the aforementioned~\cite[Cor.~2.36]{adamek-rosicky}  that $\class{B} \subseteq \cat{K}$ is an accessibly embedded, accessible subcategory.
In particular, the proof there shows that there is  a regular $\lambda' \vartriangleright \lambda$  such that  $\class{B}$ is $\lambda'$-accessible ($\lambda'$ is called $\mu$ in the proof there). Note then that $\cat{K}$ too is  $\lambda'$-accessible, and still, $\class{B}\subseteq \cat{K}$ is $\lambda'$-accessibly embedded. It follows from Lemma~\ref{lemma-raising-acc-index}(2) that we can  raise the regular cardinal  $\lambda'$ to   arbitrarily  large ones  $\mu \vartriangleright \lambda'$ such that the statement in (2) holds. For such regular cardinals $\mu$, we have that $\class{B}\subseteq \cat{K}$ is again a $\mu$-accessibly embedded, $\mu$-accessible subcategory and the proof is complete.
\end{proof}

\begin{corollary}\label{cor-rightperp-accessible}
Let $\cat{G}$ be any Grothendieck category and let  $(\class{A}, \class{B})$ be any cotorsion pair that is cogenerated by a set.  Then there exists a regular cardinal $\mu$ 
 such that $\cat{B}$ is closed under $\mu$-direct limits, and there is  a set (not a proper class) $T\subseteq \class{B}$ of $\mu$-presentable objects such that each $B\in\cat{B}$ is a $\mu$-direct limit of objects from $T$.
\end{corollary}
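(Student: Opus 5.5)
The plan is to obtain this as a direct specialization of Theorem~\ref{them-rightperp-accessible}, taking $\cat{K} := \cat{G}$ with its abelian exact structure $\class{E}$ of all short exact sequences. So we only need to check the hypotheses of that theorem and then translate its conclusion.

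First, the hypotheses on the ambient category. A Grothendieck category is locally presentable~\cite[Prop.~3.10]{beke}, so in particular it is an accessible additive category. It is cocomplete, so it has coproducts. The abelian structure is a Quillen exact structure whose admissible monics and epics are simply all monomorphisms and epimorphisms, and $\Ext^1_{\class{E}} = \Ext^1_{\cat{G}}$ (Yoneda Ext). For admissible generators, take a generator $U$ of $\cat{G}$, or the set of all its subobject representatives, or more simply any generating set $\{U_i\}$. Every object $M$ is then a quotient $\bigoplus U_i \twoheadrightarrow M$, and every epimorphism is admissible. So the family is a set of admissible generators in the sense of~\cite[\S9.5]{gillespie-book}; checking this against the precise definition there is the only point needing care.

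Second, the cogeneration hypothesis. That $(\class{A},\class{B})$ is cogenerated by a set means there is a set $\class{S}$ with $\class{B} = \rightperp{\class{S}}$, where the perp is taken with respect to $\Ext^1_{\cat{G}}$. With these inputs, Theorem~\ref{them-rightperp-accessible} yields a regular cardinal $\mu$ for which its final sentence holds. Namely, $\class{B}$ is closed under $\mu$-direct limits in $\cat{G}$, and there is a set $T\subseteq\class{B}$ of $\mu$-presentable objects such that every $B\in\class{B}$ is a $\mu$-direct limit of objects of $T$. This is exactly the statement of Corollary~\ref{cor-rightperp-accessible}. By item (2) of the theorem, $\mu$-presentable may be read either internally in $\class{B}$ or in $\cat{G}$.

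I expect no real obstacle. The one step to state carefully is the verification that $(\cat{G},\class{E})$ has admissible generators, since Theorem~\ref{them-rightperp-accessible} uses it through the existence of generating monics~\cite[Prop.~9.29]{gillespie-book}. In the abelian Grothendieck setting this is immediate from the existence of a generator.
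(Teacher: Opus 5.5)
Your proposal is correct and matches the paper's (implicit) argument: the paper records this corollary with no separate proof, as the specialization of Theorem~\ref{them-rightperp-accessible} to $\cat{K}=\cat{G}$ with its abelian exact structure. In that setting any generating set gives admissible generators, and cogeneration by a set means exactly $\class{B}=\rightperp{\class{S}}$.
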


The corollary below follows from the version of Baer's criterion for injectivity that holds in Grothendieck categories; \cite[Prop.~V.2.9]{stenstrom}.

\begin{corollary}\label{cor-injectives-accessible}
For any Grothendieck category $\cat{G}$ there exists a regular cardinal $\mu$ such that  $\mu$-direct limits of injectives are again injective, and there is a set (not a proper class)  $T$ of $\mu$-presentable injective objects  such that every injective object is a $\mu$-direct limit of objects from $T$.
\end{corollary}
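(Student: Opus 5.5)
The plan is to present the class of injectives as a right Ext-orthogonal class $\rightperp{\class{S}}$ for a set $\class{S}$, and then apply Theorem~\ref{them-rightperp-accessible}, in the form of Corollary~\ref{cor-rightperp-accessible}, to the abelian exact structure on $\cat{G}$.

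First fix a generator $U$ of $\cat{G}$, which exists since Grothendieck categories have a generator (take the coproduct of a generating set). By Baer's criterion in Grothendieck categories \cite[Prop.~V.2.9]{stenstrom}, an object $E$ is injective if and only if every morphism $K \xrightarrow{} E$ from a subobject $K \subseteq U$ extends to $U$. Put
$$\class{S} := \{\, U/K \mid K \subseteq U \,\}.$$
This is a set, because the subobjects of $U$ form a set in a Grothendieck category.

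Next I claim that the injectives are exactly $\rightperp{\class{S}}$. If $E$ is injective, then $\Ext^1(U/K,E)=0$ trivially. Conversely, suppose $\Ext^1_{\cat{G}}(U/K,E)=0$ for all $K \subseteq U$. The long exact sequence obtained by applying $\Hom(-,E)$ to $0\to K\to U\to U/K\to 0$ shows that $\Hom(U,E)\to\Hom(K,E)$ is surjective. So every map $K \to E$ extends to $U$, and Baer's criterion gives that $E$ is injective.

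It remains to check the hypotheses of Theorem~\ref{them-rightperp-accessible} for $\cat{K}=\cat{G}$ with its abelian exact structure. $\cat{G}$ is locally presentable \cite[Prop.~3.10]{beke}, hence accessible. It has coproducts, and the generator $U$ is an admissible generator, since every object is an epimorphic image of a coproduct of copies of $U$. Equivalently, one can note that $(\leftperp{\class{B}},\class{B})$ with $\class{B}=\rightperp{\class{S}}$ is a cotorsion pair cogenerated by the set $\class{S}$ and quote Corollary~\ref{cor-rightperp-accessible} directly.

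That result yields a regular cardinal $\mu$ such that $\class{B}$, the class of injectives, is closed under $\mu$-direct limits. It also yields a set $T$ of $\mu$-presentable objects of $\class{B}$, hence $\mu$-presentable injectives, such that every injective is a $\mu$-direct limit of objects of $T$. Here $\mu$-presentable is meant in $\cat{G}$, which is guaranteed by part (2) of Theorem~\ref{them-rightperp-accessible}.

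The only step needing care is the identification of the injectives with $\rightperp{\class{S}}$, in particular that Baer's criterion in the Grothendieck setting uses only subobjects of a single generator. Once that holds, everything else is a direct citation of the earlier results.
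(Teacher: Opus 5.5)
Your proposal is correct and follows the paper's route. The paper justifies this corollary by observing that Baer's criterion \cite[Prop.~V.2.9]{stenstrom} exhibits the injectives as $\rightperp{\class{S}}$, where $\class{S}$ is the set of quotients $U/K$ of a generator (as in the proof of Theorem~\ref{them-A-acyclic-model}), and then applying Corollary~\ref{cor-rightperp-accessible}; your use of part (2) of Theorem~\ref{them-rightperp-accessible} to get $\mu$-presentability in $\cat{G}$ is a correct detail the paper leaves implicit.
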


\section{$A$-acyclic complexes of injectives and model structure}

Throughout this section we let $\cat{G}$ be a Grothendieck category with a set of generators   $\mathcal{U} = \{U_i\}$. We prove Theorem~\ref{them-A-acyclic-model}, which is useful for  constructing  injective abelian  model structures on the category $\cha{G}$ of chain complexes.

\begin{definition}\label{def-A-acyclic}
Let $X$ be a chain complex of injective objects from $\cat{G}$.
\begin{itemize}
\item  Let $A\in\cat{G}$ be an object. We say that $X$ is  \emph{$A$-acyclic} if $\Hom_{\cat{G}}(A,X)$ is an exact complex of abelian groups.  If $X$ itself is also exact, we say that it is an  \emph{exact $A$-acyclic} complex of injectives.
\item Let $\class{A}\subseteq \cat{G}$ be a class of objects. We say that $X$ is \emph{(exact) $\class{A}$-acyclic} if it is (exact) $A$-acyclic for all $A\in\class{A}$.
\end{itemize}
\end{definition}

\begin{definition}
$A = \{A_j\} \subseteq \cat{G}$ be any set of objects. We let $\varinjlim A$ denote the \emph{direct limit closure} of $A$. By this we mean the class of all objects equal to the colimit of some directed system of objects from $A$.
\end{definition}

The root of the theorem below goes back to~\cite[Them.~4.1]{bravo-gillespie-hovey}. But the statements here concerning direct limits were not there and will prove extremely useful when combined with Theorem~\ref{them-rightperp-accessible}.

\begin{notation}
Given any class of objects $\class{B}\subseteq \cat{G}$, we let $\class{I}_{\class{B}}$ denote the class of all (not necessarily exact) $\class{B}$-acyclic chain complexes of injectives.
\end{notation}

\begin{theorem}\label{them-A-acyclic-model}
Let $A = \{A_j\} \subseteq \cat{G}$ be any set of objects and let   $\class{B}$ be any class of objects  such that $A \subseteq \class{B} \subseteq \varinjlim A$.  Then we have $\class{I}_{\class{B}} = \class{I}_A$ and there is a cofibrantly generated injective   model structure $$\mathfrak{M}_A = (All, \class{W}, \class{I}_{\class{B}})$$ on $\cha{G}$ with each of the following properties:
\begin{enumerate}
\item  
The thick class $\class{W}$ contains all contractible complexes and  it is  closed under direct limits.
\item An object $M$ of the ground category $\cat{G}$ is in $\class{W}$, meaning $S^0(M) \in \class{W}$,  if and only if, $\Hom_{\cat{G}}(M,X)$ is an exact complex of abelian groups  for all $X \in \class{I}_A$.  In other words, $M$ is trivial precisely   when each  $X\in \class{I}_A$ is $M$-acyclic. \\  In particular, $A \subseteq \class{B} \subseteq \varinjlim A\subseteq \class{W}$.
\item More generally, a  complex  $W$ is  in $\class{W}$, if and only if, the total hom complex $\Hom_{\cat{G}}(W,X)$ is exact for all $X \in \class{I}_A$.  
\item  $\class{W}$  is also thick when viewed as a full subcategory of $K(\cat{G})$, the chain homotopy category of $\cat{G}$. A chain map is a weak equivalence if and only if  its mapping cone is in $\class{W}$. The formal homotopy category $\textnormal{Ho}(\mathfrak{M}_A)$  identifies with the Verdier quotient  $K(\cat{G})/\class{W}$, and
 it is equivalent to the strictly full subcategory of  $K(\cat{G})$ generated  by the complexes in  $\class{I}_A$. Lastly, this is a well-generated triangulated category.
\end{enumerate}
\end{theorem}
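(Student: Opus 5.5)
First I would show $\class{I}_{\class{B}} = \class{I}_A$. Since $A\subseteq\class{B}$, clearly $\class{I}_{\class{B}}\subseteq\class{I}_A$. For the converse, fix $X\in\class{I}_A$ and $B=\varinjlim A_i$, a directed colimit of objects from $A$. The key observation is that $X$ is $M$-acyclic if and only if $\Ext^1_{\cha{G}}(S^n(M),X)=0$ for all $n$, equivalently (since $X$ is degreewise injective, so the relevant extensions are degreewise split) every chain map $S^{n}(M)\to X$, i.e.\ $M\to Z_nX$, extends to $D^{n+1}(M)$. In other words, it factors through $X_{n+1}\to Z_nX$. I would instead phrase $M$-acyclicity as the vanishing of $\Hom_{K(\cat{G})}(S^n(M),X)$ for all $n$. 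Since $\Hom_{\cat{G}}(\varinjlim A_i,X)=\varprojlim \Hom_{\cat{G}}(A_i,X)$ is an inverse limit of exact complexes, exactness is not automatic; this is the main obstacle. To handle it, I would use that a direct limit of a directed system is the cokernel of a map between coproducts, $\bigoplus A_{ij}\to\bigoplus A_i\to B\to 0$. The better route is via the model-structure characterization in (2): once the model structure $\mathfrak{M}_A$ is built, I show that its class $\class{W}$ is closed under direct limits and contains $A$. Then $S^0(B)\in\class{W}$, so by (2) each $X\in\class{I}_A$ is $B$-acyclic. Thus I would construct the model first and deduce the equality last.

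To construct the model, I would follow \cite[Them.~4.1]{bravo-gillespie-hovey}. Let $\class{S}'$ be the set of complexes $S^n(A_j)$ together with $S^n(U)$-type generating complexes (for instance, $D^n(U_i)$ and a set of complexes cogenerating the injective-complex condition via Baer's criterion: the quotients $D^n(U_i)/D^n(J)$ for subobjects $J\subseteq U_i$). Then $\rightperp{\class{S}'}$ consists exactly of the complexes of injectives $X$ that are $A$-acyclic. The cotorsion pair $(\leftperp{(\class{I}_A)},\class{I}_A)$ is cogenerated by a set, hence complete in the Grothendieck category $\cha{G}$. One checks that $\class{I}_A$ is closed under suspension and that it is an injective cotorsion pair in the sense of \cite[\S8.1]{gillespie-book}: $\class{W}:=\leftperp{(\class{I}_A)}$ is thick and contains the injective complexes ($\class{W}\cap\class{I}_A$ = contractible complexes of injectives, using that $A\ni$ generators are not needed here since chain homotopy classes are detected by $\Ext^1$ against disks). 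Thickness follows from the standard criterion that $\class{I}_A$ is closed under cokernels of monomorphisms among its objects, which holds because degreewise-split short exact sequences of complexes of injectives give long exact Hom-sequences.

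For (3), I would identify $\Ext^1_{\cha{G}}(W,X)$, for $X$ degreewise injective, with $H_{-1}\Hom(W,X)$ up to shift; then $W\in\class{W}$ iff the total Hom complex is exact. Then (2) is the special case $W=S^0(M)$ together with shifts. Closure of $\class{W}$ under direct limits comes from cofibrant generation: $\class{W}=\leftperp{(\class{I}_A)}$ is the class of direct summands of transfinite extensions of $\class{S}'$ plus injectives. A cleaner route uses (3) together with the fact that $\Hom(\varinjlim W_i,X)=\varprojlim\Hom(W_i,X)$, and I expect the $\varprojlim^1$ issue to be the delicate point. I would handle it using the homotopy-category description: $\class{W}$ is precisely the kernel of $\Hom_{K(\cat{G})}(-,\class{I}_A)$, and a directed colimit is a homotopy colimit of a coproduct telescope modulo contractibles. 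For (4), I would cite the general theory of injective model structures \cite[\S8]{gillespie-book}, which gives $\mathrm{Ho}=K(\cat{G})/\class{W}\simeq K(\class{I}_A)$. Well-generation would follow from cofibrant generation via Rosick\'y's theorem on combinatorial stable models.
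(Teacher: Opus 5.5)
Your architecture matches the paper's. You use the same cogenerating set of disks on the Baer quotients together with the spheres $S^n(A_j)$, and the same identification $\Ext^1_{\cha{G}}(\Sigma^n W,X)\cong H_{n-1}\Hom_{\cat{G}}(W,X)$ for degreewise injective $X$ to get (3) and (2). You also follow the same order of deducing $\class{I}_{\class{B}}=\class{I}_A$ last, from $\varinjlim A\subseteq\class{W}$. Your thickness argument also works: heredity together with $\class{W}\cap\class{I}_A$ being the contractible complexes of injectives. It is quicker, though, to read thickness off from (3), since $\Hom_{\cat{G}}(-,X)$ turns short exact sequences of complexes into short exact sequences of total Hom complexes.

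The gap is the step everything hinges on: closure of $\class{W}$ under direct limits. You correctly spot that $\varprojlim \Hom_{\cat{G}}(W_i,X)$ need not be exact, but you never resolve this.

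\begin{itemize}
\item The telescope idea only covers sequential colimits. Even there, the Milnor sequence $0\to\bigoplus W_n\to\bigoplus W_n\to\varinjlim W_n\to 0$ is not degreewise split, so it gives no triangle in $K(\cat{G})$. What actually works is thickness of $\class{W}$ in $\cha{G}$ plus closure under coproducts.
\item For an arbitrary directed poset there is no telescope. A homotopy colimit such as the bar construction does lie in $\class{W}$, but its comparison map to $\varinjlim W_i$ is only a quasi-isomorphism. Its cone is an exact complex, and exact complexes are not trivial in general; for $A=\emptyset$, a non-contractible exact complex of injectives lies in $\class{I}_A\setminus\class{W}$.
\item Reducing to well-ordered continuous chains with non-monic transition maps also needs real work.
\end{itemize}

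The paper does not prove this by hand. Once $(\class{W},\class{I}_A)$ is known to be an injective cotorsion pair cogenerated by a set in the Grothendieck category $\cha{G}$, it invokes a general result from the literature (Prop.~3.2 of Gillespie's Ding-modules paper, also Cor.~9.57 of his book): the trivial class of such a pair is closed under direct limits.

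This is not a formality. Take $R$-modules and $A=\{R\}$. Then direct-limit closure puts every flat module into $\class{W}$, so by (2) $\Hom_R(F,X)$ is exact for every flat $F$ and every exact complex of injectives $X$. Equivalently, cycles of acyclic complexes of injectives are cotorsion, which is a known but genuinely nontrivial theorem.

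So you must either cite that result or supply a real argument. As written, property (1) is unproved, and with it the equality $\class{I}_{\class{B}}=\class{I}_A$.
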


\begin{proof}
By Baer's criterion, the set of all $\{\,U_i/C\,\}$ where $C$ ranges through all subobjects of each $U_i$, cogenerates the categorical injective cotorsion pair, $(\cat{G},\class{I})$,  in our ground category $\cat{G}$. We define a set of complexes $\class{S}$ as follows where  all of the $n$'s  range through $\Z$:
\[
\class{S} = \{D^{n} (U_i/C) \,|\, U_i \in\mathcal{U} , \ \forall C \subseteq U_i\} \bigcup   \{S^{n} (A_j) \,|\, A_j \in A \}.
\]
One checks that $\rightperp{\class{S}} = \class{I}_A$.
Indeed by~\cite[Prop.~4.4]{gillespie-degreewise-model-strucs}, the right Ext-orthogonal to the set $$\{D^{n} (U_i/C) \,|\, U_i \in\mathcal{U} , \ \forall C \subseteq U_i\}$$ is precisely the class of all  complexes of injectives.  So then $\rightperp{\class{S}}$ is precisely the class of all complexes of injectives $X$ such that $$\Ext^{1}_{\cha{G}}(S^{n}(A_j),X) = 0$$
 for  all $n$ and all $A_j\in A$.
 But since $X$ is a complex of injectives, one can show (see  \cite[Cor.~10.22 and Exer.~10.4.1]{gillespie-book}) that  for a $\cat{G}$-object $M$ we have an isomorphism
\begin{equation}\label{eq-iso}
  \tag{$\dagger$}
\Ext^{1}_{\cha{G}}(S^{n}(M),X) \cong H_{n-1}[\Hom_{\cat{G}}(M,X)].
 \end{equation}
From this it follows that $\rightperp{\class{S}} = \class{I}_A$.

Now defining $\class{W} := \leftperp{\class{I}_A}$, we have  that $(\class{W}, \class{I}_A)$ is a cotorsion pair that is cogenerated by $\class{S}$. We note that both $\class{W}$ and $\class{I}_A$ are closed under suspensions (shifts).  The isomorphism in~(\ref{eq-iso}) is a special case of the well-known isomorphism
$$\Ext^{1}_{dw}(\Sigma^{n}W,X) \cong H_{n-1}[\Hom_{\cat{G}}(W,X)],$$
between degreewise split extensions of complexes and the homology of the total hom complex.
(For example, see again  \cite[Cor.~10.22]{gillespie-book}.) The characterization of $\class{W}$ stated in (3)  follows from this isomorphism. What we have asserted in (2) follows from the special case isomorphism shown above in~(\ref{eq-iso}).

It follows easily from property (3)  that $\class{W}$ is closed under direct summands and satisfies the 2 out of 3 property on short exact sequences. It is also clear that $\class{W}$ contains all contractible complexes. This includes  the categorically injective complexes, which are precisely the contractible complexes of injectives, as well as the generating set $\{D^n(U_i) \,|\, U_i\in\mathcal{U}\}$ for the Grothendieck category $\cha{G}$. From these two facts it follows  that  $(\class{W}, \class{I}_A)$ is a (cofibrantly generated and functorially complete) injective cotorsion pair; see~\cite[Prop.~2.21(4) and Them.~9.34]{gillespie-book}. It is then automatic from~\cite[Prop.~3.2]{gillespie-ding-modules} that $\class{W}$ is closed under direct limits. (This can also be found in~\cite[Cor.~9.57]{gillespie-book}.)

At this point we have constructed a cofibrantly generated abelian model structure $$\mathfrak{M} = (All, \class{W}, \class{I}_A)$$ on $\cha{G}$ having all of the properties listed in (1), (2),  and (3). The   properties listed in (4) follow automatically from general statements; see~\cite[Them.~10.27 and~10.29 and Cor.~12.24]{gillespie-book}.

To finish, let us make  clear that  $\class{I}_{\class{B}} = \class{I}_A$.
Because of the containments $$A \subseteq \class{B} \subseteq \varinjlim A,$$ we clearly have that for any complex $X$  of injectives,
$$\textnormal{$X$ is  $(\varinjlim A)$-acyclic $\implies$ $X$ is  $\class{B}$-acyclic $\implies$  $X$ is  $A$-acyclic.}$$
  But it follows immediately from the properties stated in (1) and (2)   that
  $$\textnormal{$X$ is $A$-acyclic $\implies$ $X$ is  $(\varinjlim A)$-acyclic.}$$  So the three types of acyclicity coincide. In particular, $\class{I}_A = \class{I}_{\class{B}}$.
\end{proof}

The following result is immediate by combining Corollary~\ref{cor-rightperp-accessible} and Theorem~\ref{them-A-acyclic-model}.

\begin{corollary}\label{cor-injectives-B-acyclic}
Let $\cat{G}$ be a Grothendieck category and $(\class{A}, \class{B})$ be any cotorsion pair that is cogenerated by a set.   Then there exists a set of objects $T\subseteq \class{B}$ such that
 $\class{I}_T = \class{I}_{\class{B}}$ and for which applying Theorem~\ref{them-A-acyclic-model} to $T$ yields exactly $\mathfrak{M}_T = (All, \class{W}, \class{I}_{\class{B}})$. That is, the fibrant objects are precisely the (not necessarily exact) $\class{B}$-acyclic complexes of injectives.
\end{corollary}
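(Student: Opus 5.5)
The plan is to combine Corollary~\ref{cor-rightperp-accessible} with Theorem~\ref{them-A-acyclic-model}. Since $(\class{A},\class{B})$ is cogenerated by a set, say $\class{B}=\rightperp{\class{S}}$ for a set $\class{S}$, Corollary~\ref{cor-rightperp-accessible} produces a regular cardinal $\mu$ and a set $T\subseteq\class{B}$ of $\mu$-presentable objects such that every $B\in\class{B}$ is a $\mu$-direct limit of objects from $T$. A $\mu$-directed poset is in particular directed, so each such $B$ is a direct limit of a directed system of objects from $T$. This gives the chain of containments
$$T\subseteq\class{B}\subseteq\varinjlim T,$$
where $\varinjlim T$ is the direct limit closure of the set $T$.

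These containments are exactly the hypothesis of Theorem~\ref{them-A-acyclic-model}, applied with $A:=T$. That theorem then gives $\class{I}_{\class{B}}=\class{I}_T$, together with the cofibrantly generated injective model structure $\mathfrak{M}_T=(All,\class{W},\class{I}_{\class{B}})$ on $\cha{G}$, with $\class{W}=\leftperp{\class{I}_T}$. Its fibrant objects are the objects of $\class{I}_{\class{B}}$, which by the Notation preceding the theorem are the (not necessarily exact) $\class{B}$-acyclic complexes of injectives. This is precisely the assertion.

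There is no real obstacle here. The only point to check is that the containment $\class{B}\subseteq\varinjlim T$ really holds, that is, that a $\mu$-direct limit counts as a colimit of a directed system in the sense of the definition of $\varinjlim A$. This is immediate from the definitions: a $\mu$-directed poset is in particular directed. The substantive inputs are the accessibility result, Theorem~\ref{them-rightperp-accessible}, and the direct-limit closure argument inside Theorem~\ref{them-A-acyclic-model}, and both are already established.
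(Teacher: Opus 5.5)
Your proposal is correct and matches the paper's argument: the paper also takes $T$ from Corollary~\ref{cor-rightperp-accessible}, so that $T\subseteq\class{B}\subseteq\varinjlim T$, and then applies Theorem~\ref{them-A-acyclic-model} with $A=T$. The paper leaves implicit that a $\mu$-directed system is in particular directed, and you check this explicitly.
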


In particular, by Corollary~\ref{cor-injectives-accessible} we get the following curious result.

\begin{corollary}\label{cor-injectives-acyclic}
Let $\class{I}$ denote the class of all injective objects in any Grothendieck category $\cat{G}$.  Allow $\class{I}_{\class{I}}$ to denote  the class of all (not necessarily exact)  $\class{I}$-acyclic complexes of injectives $X$.  Then there exists a set of injective objects $T$ such that
 $\class{I}_T = \class{I}_{\class{I}}$ and for which applying Theorem~\ref{them-A-acyclic-model} to $T$ yields exactly $\mathfrak{M}_T = (All, \class{W}, \class{I}_{\class{I}})$.
\end{corollary}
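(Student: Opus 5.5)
Corollary~\ref{cor-injectives-acyclic} is a special case of Corollary~\ref{cor-injectives-B-acyclic}, obtained by taking $\class{B} = \class{I}$. We first recognize the class of injectives as a right cotorsion class cogenerated by a set. Fix a generating set $\mathcal{U}=\{U_i\}$ for $\cat{G}$ and let $\class{S}$ be the set of all quotients $U_i/C$, where $C$ ranges over the subobjects of the $U_i$. By Baer's criterion in Grothendieck categories (\cite[Prop.~V.2.9]{stenstrom}), an object $E$ is injective if and only if $\Ext^1_{\cat{G}}(U_i/C,E)=0$ for all such quotients. Hence $\class{I}=\rightperp{\class{S}}$, and $(\leftperp{\class{I}},\class{I}) = (\cat{G},\class{I})$ is a cotorsion pair cogenerated by the set $\class{S}$.

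Next we invoke Corollary~\ref{cor-rightperp-accessible}, or equivalently Corollary~\ref{cor-injectives-accessible}. It provides a regular cardinal $\mu$ and a set $T\subseteq\class{I}$ of $\mu$-presentable injective objects such that every injective object is a $\mu$-direct limit of objects of $T$. In particular every injective is a direct limit over a directed system of objects from $T$, so
$$T\subseteq \class{I}\subseteq \varinjlim T.$$

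We then apply Theorem~\ref{them-A-acyclic-model} with $A:=T$ and the class $\class{B}:=\class{I}$; the displayed containments are exactly its hypothesis. The theorem gives $\class{I}_{\class{I}}=\class{I}_T$ and produces the cofibrantly generated injective model structure $\mathfrak{M}_T=(All,\class{W},\class{I}_{\class{I}})$. This is precisely the assertion of Corollary~\ref{cor-injectives-B-acyclic} for the cotorsion pair $(\cat{G},\class{I})$.

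No step is a genuine obstacle. The one point to check is that the word ``set'' is honest: $\class{S}$ is a set because each $U_i$ has only a set of subobjects, and $T$ is a set by Theorem~\ref{them-rightperp-accessible}.
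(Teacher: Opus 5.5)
Your proposal is correct and follows the paper's route: the paper obtains this as the special case $\class{B}=\class{I}$ of Corollary~\ref{cor-injectives-B-acyclic}. It uses Corollary~\ref{cor-injectives-accessible} (Baer's criterion plus Theorem~\ref{them-rightperp-accessible}) to produce a set $T$ of injectives with $T\subseteq\class{I}\subseteq\varinjlim T$, and then applies Theorem~\ref{them-A-acyclic-model}.
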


\begin{remark}
Although we will not use it in this paper, it seems worth pointing out that Theorem~\ref{them-A-acyclic-model} is in fact a corollary of a more general theorem. The reader will note that  the notion of an $A$-acyclic complex $X$ of injectives (Definition~\ref{def-A-acyclic})  applies equally well to any set $A = \{A_j\}$ of \emph{chain complexes} over $\cat{G}$, not just $\cat{G}$-objects.  Indeed one just uses acyclicity of the total hom complex $\Hom_{\cat{G}}(A_j,X)$ for complexes $A_j \in A$.  Then the analogous statement of Theorem~\ref{them-A-acyclic-model}, and its proof go through in the same way. One just adjusts the cogenerating set  in the proof to be the more general  set \[
\class{S} = \{D^{n} (U_i/C) \,|\, U_i \in\mathcal{U} , \ \forall C \subseteq U_i\} \bigcup   \{\Sigma^{n}A_j \,|\, A_j \in A \}.
\]
The rest of the proof goes through in the same way.
\end{remark}

\section{Generators for totally acyclic complexes of injectives}\label{section-finite-dim}

Assume again throughout that $\cat{G}$ is a Grothendieck category. 

\begin{notation}
Given any class of objects $\class{B}\subseteq \cat{G}$, we let $\exinj_{\class{B}}$ denote the class of all \emph{exact} $\class{B}$-acyclic chain complexes of injectives. (See Definition~\ref{def-A-acyclic}.)
\end{notation}

We will now give a result providing conditions  guaranteeing that $\exinj_{\class{B}}$ is the class of fibrant objects in an abelian model structure on $\cha{G}$. A technical condition we assume is for $\cat{G}$ to admit a set $\mathcal{U}$ of generators such that each complex $X \in  \exinj_{\class{B}}$ is $\class{U}$-acyclic (again in the sense of Definition~\ref{def-A-acyclic}). But there are two important points to be made. First, the condition is \emph{necessary} (see Lemma~\ref{lemma-generators-cond}) if we are to obtain completeness of Gorenstein $\class{B}$-injective cotorsion pairs. Second, as we will point out at the end of this section, this condition holds automatically for a vast class of  important Grothendieck categories; the ones we call \emph{locally finite dimensional}.

\begin{proposition}\label{them-exact-B-acyclic-models}
Let $A = \{A_j\} \subseteq \cat{G}$ be any set of objects and let   $\class{B}$ be any class of objects  such that $A \subseteq \class{B} \subseteq \varinjlim A$. Assume $\cat{G}$ admits a set $\mathcal{U}$ of generators such that each $X \in  \exinj_{\class{B}}$ is $\class{U}$-acyclic.  Set $\widehat{A} = A\cup\mathcal{U}$.
Then $\class{I}_{\widehat{A}} = \exinj_{\class{B}}$ and applying Theorem~\ref{them-A-acyclic-model} to the set $\widehat{A}$ yields a cofibrantly generated abelian model structure
$$\mathfrak{M}_{\widehat{A}} = (All, \class{W}, \exinj_{\class{B}})$$
with all of the properties (1)--(4) listed there. 
\end{proposition}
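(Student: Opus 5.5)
The argument reduces to showing the equality $\class{I}_{\widehat{A}} = \exinj_{\class{B}}$ of classes of complexes of injectives. Once this holds, Theorem~\ref{them-A-acyclic-model} applied to the set $\widehat{A}$ (with $\widehat{A}$ itself in the role of the class between $\widehat{A}$ and $\varinjlim \widehat{A}$) produces the cofibrantly generated injective model structure $(All,\class{W},\class{I}_{\widehat{A}}) = (All,\class{W},\exinj_{\class{B}})$ with properties (1)--(4). The equality itself splits into two inclusions.

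For $\exinj_{\class{B}} \subseteq \class{I}_{\widehat{A}}$, let $X$ be an exact $\class{B}$-acyclic complex of injectives. It is $A$-acyclic because $A\subseteq\class{B}$. It is $\mathcal{U}$-acyclic by the hypothesis on $\mathcal{U}$. Hence it is $\widehat{A}$-acyclic.

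For $\class{I}_{\widehat{A}} \subseteq \exinj_{\class{B}}$, let $X$ be an $\widehat{A}$-acyclic complex of injectives. Since $X$ is $A$-acyclic, Theorem~\ref{them-A-acyclic-model} (applied to $A$ and $\class{B}$) gives $\class{I}_A=\class{I}_{\class{B}}$, so $X$ is $\class{B}$-acyclic. It remains to show that $X$ is exact, which is the only substantive step. I would use that $\mathcal{U}$ generates $\cat{G}$ and argue as follows.

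Take a cycle $z\colon U\to Z_nX$ with $U\in\mathcal{U}$. The composite $U\to X_n$ is a cycle in $\Hom_{\cat{G}}(U,X)$, since its composite with $d_n$ is zero. Because $\Hom_{\cat{G}}(U,X)$ is exact, there is $h\colon U\to X_{n+1}$ with $d_{n+1}h$ equal to that composite. Therefore every morphism from a generator into $Z_nX$ factors through $B_nX$.

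Now $Z_nX$ is a quotient of a coproduct of copies of generators, and every such map factors through $B_nX$. It follows that $Z_nX\subseteq B_nX$, so $H_nX=0$. Equivalently, $\Hom_{\cat{G}}(U,H_nX)$ vanishes on the image of $\Hom_{\cat{G}}(U,Z_nX)$ for all generators $U$, which forces $Z_nX=B_nX$.

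Hence $X$ is exact and $\class{B}$-acyclic, so $X\in\exinj_{\class{B}}$. This completes the equality and the proof.
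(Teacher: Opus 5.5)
Your proposal is correct and follows the paper's proof essentially verbatim. It uses the same two inclusions, with $\class{I}_A=\class{I}_{\class{B}}$ from Theorem~\ref{them-A-acyclic-model} in one direction and the hypothesis on $\mathcal{U}$ in the other. The only difference is that you spell out why $\mathcal{U}$-acyclicity forces exactness: every map $\bigoplus U_k \twoheadrightarrow Z_nX$ factors through $B_nX$, so $B_nX = Z_nX$. The paper simply asserts this step from $\mathcal{U}$ being a generating set.
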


\begin{proof}
Of course by taking the set $A$ to be $\widehat{A}$ in  Theorem~\ref{them-A-acyclic-model} we do get a model structure, $\mathfrak{M}_{\widehat{A}} = (All, \class{W}, \class{I}_{\widehat{A}})$, with all of the properties listed there. We only need to show that $\class{I}_{\widehat{A}} =  \exinj_{\class{B}}$, the class of all exact $\class{B}$-acyclic complexes of injectives.

\noindent $(\subseteq)$
Assume  $X \in \class{I}_{\widehat{A}}$. Then, by definition, $X$ is a complex of injectives that is both $A$-acyclic and $\class{U}$-acyclic. By Theorem~\ref{them-A-acyclic-model}, $\class{I}_{\class{B}} = \class{I}_A$, so $X$ is $\class{B}$-acyclic.  Being also $\class{U}$-acyclic  implies that $X$ itself is exact, as $\class{U}$ is a set of generators.

\noindent $(\supseteq)$ On the other hand, if $X\in\exinj_{\class{B}}$, then certainly it is $A$-acyclic, as $A\subseteq \class{B}$. By hypothesis, $X$ is also $\class{U}$-acyclic. This means $X$ is in $\class{I}_{\widehat{A}}$. \\
 This shows  $\class{I}_{\widehat{A}} =  \exinj_{\class{B}}$, and that is all that was needed to be shown.
\end{proof}

Thus we have a version of Corollary~\ref{cor-injectives-B-acyclic} where the fibrant objects are the \emph{exact} $\class{B}$-acyclic complexes.
We will call the model structure in the next corollary the \textbf{\emph{exact $\class{B}$-acyclic model structure}}.

\begin{corollary}\label{cor-exact-injectives-B-acyclic}
Let $\cat{G}$ be a  Grothendieck category and $(\class{A}, \class{B})$ be a cotorsion pair that is cogenerated by a set.  Assume $\cat{G}$ admits a set $\mathcal{U}$ of generators such that each $X \in  \exinj_{\class{B}}$ is $\class{U}$-acyclic.  Then there exists a set of objects $\widehat{T}$ for which $\class{I}_{\widehat{T}} = \exinj_{\class{B}}$ and applying Theorem~\ref{them-A-acyclic-model} to the set $\widehat{T}$ yields a cofibrantly generated abelian model structure
$$\mathfrak{M}_{\widehat{T}} = (All, \class{W}, \exinj_{\class{B}})$$
with all of the properties (1)--(4) listed there. 
\end{corollary}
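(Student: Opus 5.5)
The plan is to combine Corollary~\ref{cor-rightperp-accessible} with Proposition~\ref{them-exact-B-acyclic-models}, in the same way that Corollary~\ref{cor-injectives-B-acyclic} was obtained from Theorem~\ref{them-A-acyclic-model}.

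First, since $(\class{A},\class{B})$ is cogenerated by a set, Corollary~\ref{cor-rightperp-accessible} provides a regular cardinal $\mu$ and a set $T\subseteq\class{B}$ of $\mu$-presentable objects such that every $B\in\class{B}$ is a $\mu$-direct limit of objects from $T$. A $\mu$-directed poset is in particular directed. Hence every such $B$ lies in the direct limit closure $\varinjlim T$, and we obtain the containments
$$T\subseteq\class{B}\subseteq\varinjlim T.$$
This is exactly the hypothesis required on the pair $(A,\class{B})$ in Proposition~\ref{them-exact-B-acyclic-models}, with $A:=T$.

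Second, by assumption $\cat{G}$ has a set of generators $\mathcal{U}$ such that every $X\in\exinj_{\class{B}}$ is $\mathcal{U}$-acyclic. Proposition~\ref{them-exact-B-acyclic-models} applied to $A=T$ therefore yields, with $\widehat{T}:=T\cup\mathcal{U}$, the equality $\class{I}_{\widehat{T}}=\exinj_{\class{B}}$. It also shows that applying Theorem~\ref{them-A-acyclic-model} to the set $\widehat{T}$ produces the cofibrantly generated model structure $\mathfrak{M}_{\widehat{T}}=(All,\class{W},\exinj_{\class{B}})$ with properties (1)--(4). Since $\widehat{T}$ is a union of two sets, it is a set, as required.

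There is no real obstacle here. The only point needing care is that the set $T$ coming from accessibility sits between $T$ and $\varinjlim T$ in the sense of the paper's definition of direct limit closure, which uses arbitrary directed systems. This holds because a $\mu$-directed system is directed and the colimit is computed in $\cat{G}$.
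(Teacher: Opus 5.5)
Your proposal is correct and is the paper's argument: take $T$ from Corollary~\ref{cor-rightperp-accessible}, so that $T\subseteq\class{B}\subseteq\varinjlim T$, set $\widehat{T}=T\cup\mathcal{U}$, and apply Proposition~\ref{them-exact-B-acyclic-models}. You also spell out the point the paper leaves implicit, namely that $\mu$-directed systems are directed and that the colimit computed in $\class{B}$ agrees with the one in $\cat{G}$ (Theorem~\ref{them-rightperp-accessible}(1)).
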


\begin{proof}
Precisely, let $T$ be a set as guaranteed by Corollary~\ref{cor-rightperp-accessible}.  Then as in Proposition~\ref{them-exact-B-acyclic-models}, set $\widehat{T} = T\cup\mathcal{U}$, and apply the theorem.
\end{proof}

Similarly, we get the exact version of Corollary~\ref{cor-injectives-acyclic}. The usual terminology for an exact $\class{I}$-acyclic complex of injectives is that of a \emph{totally acyclic complex of injectives}. We will use $\toinj$ (instead of writing $\exinj_{\class{I}}$) to denote the class of all such complexes.

\begin{corollary}\label{cor-injectives-totally-acyclic}
Let $\cat{G}$ be a Grothendieck category and let  $\toinj$ denote the class of all totally acyclic complexes of injectives.  Assume $\cat{G}$ admits a set $\mathcal{U}$ of generators such that each $X \in  \toinj_{\class{B}}$ is $\class{U}$-acyclic.
Then there exists a set of objects $\widehat{T}$ for which $\class{I}_{\widehat{T}} = \toinj$ and applying Theorem~\ref{them-A-acyclic-model} to the set $\widehat{T}$ yields a cofibrantly generated abelian model structure
$$\mathfrak{M}_{\widehat{T}} = (All, \class{W}, \toinj)$$
with all of the properties (1)--(4) listed there. 
\end{corollary}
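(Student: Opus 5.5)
This is the special case $\class{B} = \class{I}$ of Corollary~\ref{cor-exact-injectives-B-acyclic}; there, $\toinj_{\class{B}}$ in the hypothesis is read as $\toinj = \exinj_{\class{I}}$. The first step is to realize the class $\class{I}$ of injectives as the right half of a cotorsion pair cogenerated by a set. By Baer's criterion in Grothendieck categories \cite[Prop.~V.2.9]{stenstrom}, an object $E$ is injective if and only if $\Ext^1_{\cat{G}}(U_i/C, E) = 0$ for all $U_i$ in a fixed generating set and all subobjects $C \subseteq U_i$. Hence
\[
\class{I} = \rightperp{\{U_i/C\}},
\]
so $(\cat{G}, \class{I})$ is a cotorsion pair cogenerated by a set. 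This is exactly the observation behind Corollary~\ref{cor-injectives-accessible}.

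Next, Corollary~\ref{cor-rightperp-accessible} (or directly Corollary~\ref{cor-injectives-accessible}) supplies a regular cardinal $\mu$ and a set $T$ of $\mu$-presentable injectives such that every injective is a $\mu$-direct limit of objects of $T$. In particular $T \subseteq \class{I} \subseteq \varinjlim T$. Set $\widehat{T} = T \cup \mathcal{U}$, where $\mathcal{U}$ is the generating set from the hypothesis, so that every $X \in \toinj$ is $\mathcal{U}$-acyclic. Now apply Proposition~\ref{them-exact-B-acyclic-models} with $A = T$ and $\class{B} = \class{I}$. It gives $\class{I}_{\widehat{T}} = \exinj_{\class{I}} = \toinj$, and Theorem~\ref{them-A-acyclic-model} applied to $\widehat{T}$ yields the cofibrantly generated model structure $(All, \class{W}, \toinj)$ with properties (1)--(4).

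The only real content beyond bookkeeping is the equality $\class{I}_{\widehat{T}} = \toinj$. This rests on two facts. First, $T$-acyclicity already forces $\class{I}$-acyclicity, because $\class{I} \subseteq \varinjlim T$ and $\class{W}$ is closed under direct limits. Second, $\mathcal{U}$-acyclicity forces exactness, because $\mathcal{U}$ generates. Both are already handled in the proofs of Theorem~\ref{them-A-acyclic-model} and Proposition~\ref{them-exact-B-acyclic-models}, so no step should present a genuine obstacle. The main point to verify carefully is that the cotorsion-pair hypothesis of Corollary~\ref{cor-exact-injectives-B-acyclic} is met, and this is the Baer-criterion observation above.
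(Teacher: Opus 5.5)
Your proposal is correct and is essentially the paper's proof: take $T$ from Corollary~\ref{cor-injectives-accessible}, set $\widehat{T}=T\cup\mathcal{U}$, and apply Proposition~\ref{them-exact-B-acyclic-models} with $\class{B}=\class{I}$. Your Baer-criterion remark only restates the justification already built into that corollary, and you correctly read the typo $\toinj_{\class{B}}$ in the hypothesis as $\toinj=\exinj_{\class{I}}$.
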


\begin{proof}
Precisely, let $T$ be a set as guaranteed by  Corollary~\ref{cor-injectives-accessible}. Then as in Proposition~\ref{them-exact-B-acyclic-models}, set $\widehat{T} = T\cup\mathcal{U}$, and apply the theorem.
\end{proof}

\subsection{Locally finite dimensional Grothendieck categories}\label{sec-locally-finite-dim}
Following the approach  in~\cite[\S5.1]{gillespie-models-for-hocats-of-injectives}, there is an abundance of important Grothendieck categories admitting a set $\class{U}$ of generators for which it is automatic that every $X \in  \exinj_{\class{B}}$ is $\class{U}$-acyclic. In particular, one can readily see that this is true if $\cat{G}$ admits a set of projective generators.
But we do not need to have enough projectives to discuss objects of finite projective dimension.
We say that an object $A$ of a Grothendieck category $\cat{G}$ has \emph{finite projective dimension} if for any object $B$ there is an integer $n$ for which $\Ext^i_{\cat{G}}(A,B) = 0$ for all $i>n$.  Following~\cite[\S5.1]{gillespie-models-for-hocats-of-injectives}, we then say that $\cat{G}$ is \emph{locally finite dimensional} if it possesses a generating set $\mathcal{U} = \{U_i\}$ for which each $U_i$ has finite projective dimension.

\begin{lemma}\label{lemma-U-acyclicity}
Assume that $\cat{G}$ is a  locally finite dimensional  Grothendieck category with $\mathcal{U} = \{U_i\}$  a set of generators of finite projective dimension.
Then a chain complex $X$ of injectives is exact (i.e. acyclic in the usual sense) if and only if it is $\class{U}$-acyclic in the sense of Definition~\ref{def-A-acyclic}.
In particular,  each $X \in  \exinj_{\class{B}}$ is  automatically $\class{U}$-acyclic for any class of objects $\class{B}$.
\end{lemma}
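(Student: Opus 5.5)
The easy direction is that $\class{U}$-acyclicity forces exactness. Suppose $X$ is a complex of injectives with $\Hom_{\cat{G}}(U_i,X)$ exact for all $i$, and take a cycle $Z_n X$. Since $\class{U}$ generates, it suffices to show that every morphism $U_i \to Z_nX$ lifts through $X_{n+1}\to Z_nX$. This follows because a map $U_i\to Z_nX\subseteq X_n$ is a cycle in $\Hom(U_i,X)$, hence a boundary. More precisely, $H_n(X)$ is detected by the generators: if $H_n(X)\neq 0$, pick $U_i\to Z_nX$ whose image is not contained in $B_nX$ (possible since $\class{U}$ generates $Z_nX$). That map is a cycle of $\Hom(U_i,X)$ in degree $n$, so it equals $d\circ h$ for some $h\colon U_i\to X_{n+1}$, whence its image lies in $B_nX$, a contradiction. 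So $X$ is exact. Injectivity is not needed here.

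The converse is the substantive direction. Let $X$ be an exact complex of injectives and fix $U=U_i$ of projective dimension at most $d$ (say $\Ext^k_{\cat{G}}(U,-)=0$ for $k>d$, uniformly in the second variable; the definition allows $n$ to depend on $B$, so I will work one cycle object at a time, which suffices). Set $Z_n:=Z_nX$. The short exact sequences $0\to Z_n\to X_n\to Z_{n-1}\to 0$ have injective middle terms, so dimension shifting gives
\[
\Ext^1_{\cat{G}}(U,Z_n)\cong \Ext^2_{\cat{G}}(U,Z_{n+1})\cong\cdots\cong \Ext^{k+1}_{\cat{G}}(U,Z_{n+k})
\]
for all $k\ge 0$. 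Exactness of $\Hom(U,X)$ at degree $n$ is equivalent to surjectivity of $\Hom(U,X_{n+1})\to\Hom(U,Z_n)$, and hence to the vanishing of $\Ext^1(U,Z_{n+1})$, because $\Hom(U,-)$ applied to $0\to Z_{n+1}\to X_{n+1}\to Z_n\to0$ has next term $\Ext^1(U,Z_{n+1})$ and $\Ext^1(U,X_{n+1})=0$.

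The main obstacle is obtaining a single $k$ with $\Ext^{k+1}(U,Z_{n+1+k})=0$, since finite projective dimension as defined gives a bound that may depend on the second argument. I would resolve this by observing that $\Ext^j(U,-)=0$ for $j>n_B$ at every $B$ already forces a uniform bound. Alternatively, and more cleanly, I would replace the objects $Z_{m}$ by the single object $\prod_m Z_m$. Since $\Ext$ commutes with products in the second variable in a Grothendieck category (products of injective resolutions are injective resolutions, although products need not be exact, so this needs care), there is an $N$ with $\Ext^j(U,\prod_m Z_m)=0$ for $j>N$, and hence $\Ext^j(U,Z_m)=0$ for all $m$ and $j>N$ as summands. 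Taking $k=N$ then gives $\Ext^1(U,Z_{n+1})=0$ for all $n$, so $X$ is $U$-acyclic. If the product argument is delicate, I would fall back on the standard fact (as in \cite[\S5.1]{gillespie-models-for-hocats-of-injectives}) that $\Ext^j(U,-)$ vanishing for large $j$ pointwise is equivalent to a uniform bound, via the Yoneda-$\Ext$ interpretation and a direct product of counterexamples.

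The final claim follows immediately: every $X\in\exinj_{\class{B}}$ is an exact complex of injectives, and hence is $\class{U}$-acyclic by the above.
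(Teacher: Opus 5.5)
Your proof is correct and follows the paper's argument: generators detect exactness, and for the converse you dimension-shift $\Ext^1_{\cat{G}}(U_i,Z_nX)\cong\Ext^{1+j}_{\cat{G}}(U_i,Z_{n+j}X)$ along the sequences $0\to Z_mX\to X_m\to Z_{m-1}X\to 0$. Your product step is a welcome precision, because the paper's ``vanishes for large enough $j$'' tacitly needs a bound independent of the second variable. However, drop the claim that $\Ext$ commutes with products, which is false in general since products in $\cat{G}$ need not be exact. All you use is that each $Z_mX$ is a retract of $\prod_{m'}Z_{m'}X$ and that $\Ext^j_{\cat{G}}(U_i,-)$ is additive; the same retract argument shows the pointwise definition of finite projective dimension already gives a uniform bound.
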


\begin{proof}
If $X$ is $\class{U}$-acyclic then it is certainly exact, as $\class{U}$ is a set of generators. (This holds for any set of generators.) For the converse we use that our generators are of finite projective dimension. Indeed if $X$ is an exact complex of injectives and $U_i \in \class{U}$, then by dimension shifting we get $\Ext^1_{\cat{G}}(U_i, Z_nX) \cong \Ext^{1+j}_{\cat{G}}(U_i, Z_{n+j}X)$ and this vanishes for large enough $j$.  It follows that $X$ is  $\class{U}$-acyclic.
\end{proof}

With the hypotheses of Lemma~\ref{lemma-U-acyclicity}, if we  take  the set $A$ in Theorem~\ref{them-A-acyclic-model}  to be $\mathcal{U}$, we immediately recover~\cite[Them.~5.7]{gillespie-models-for-hocats-of-injectives}; existence of the injective  model structure for Krause's stable derived category.
See Section~\ref{subsec-stable-derived} for more discussion.

\section{Completeness of Gorenstein injective cotorsion pairs}\label{sec-Goren-inj}

Again, assume throughout that $\cat{G}$ is  a Grothendieck category.
An object $M \in \cat{G}$ is called \emph{Gorenstein injective} if it is equal to the 0-cycle, $M=Z_0X$, of some totally acyclic complex of injectives $X$. The goal of this section is to understand  when  the Gorenstein injectives are the right side of a complete cotorsion pair, and when  it is cogenerated by a set.
Using our methods, we can do this simultaneously for many classes  of
Gorenstein $\class{B}$-injectives in the following general sense.

\begin{definition}\label{def-G-inj-B-tate}
Let $\class{B}\subseteq \class{G}$ be any class of objects. 
  We remind the reader of some notation set in  Section~\ref{section-finite-dim}:
\begin{itemize}
\item $\exinj_{\class{B}}$ denotes the class of all exact $\class{B}$-acyclic complexes of injectives. 
\item $\class{W} := \leftperp{\exinj_{\class{B}}}$ is the left $\Ext$ orthogonal in $\cha{G}$.
\end{itemize}
Now, we set the following definitions and notation:
\begin{itemize}
\item  An object $N \in \cat{G}$ is called \emph{Gorenstein $\class{B}$-injective} if it is equal to the 0-cycle, $N=Z_0X$, of some exact $\class{B}$-acyclic complex of injectives  $X \in \exinj_{\class{B}}$.
\item  $\class{GI}_{\class{B}}$ denotes the class of all Gorenstein $\class{B}$-injective objects.
\item  An object $M\in\cat{G}$ will be called \emph{$\class{B}$-Tate trivial} if $\Hom_{\cat{G}}(M,X)$ is an exact complex of abelian groups for all $X\in\exinj_{\class{B}}$.
\end{itemize}
 \end{definition}

See Remark~\ref{remark-B-tate} and the related Example~\ref{example-Krause} for an explanation  of  our  `$\class{B}$-Tate trivial'  terminology.  The $\class{B}$-Tate trivial objects will indeed be the trivial objects of the Gorenstein injective model structure, as the next lemma indicates.

 \begin{lemma}\label{lemma-B-Tate-char}
In Definition~\ref{def-G-inj-B-tate}, set $\class{W}_{\cat{G}} :=  \class{W} \cap \cat{G}$. More formally, this is the class of objects in $\cat{G}$ defined by $$\class{W}_{\cat{G}} := \{\mkern2mu M  \in \cat{G}  \,|\, S^0(M) \in \class{W} \mkern2mu \}.$$
Then we have  $\class{W}_{\class{G}} = \leftperp{\class{GI}_{\class{B}}}$, and this is exactly the class of all $\class{B}$-Tate trivial objects.
Moreover, $\class{W}_{\class{G}}$ is a thick class.
 \end{lemma}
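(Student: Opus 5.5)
The plan is to reduce all three descriptions of the class to a single homological condition, namely exactness of $\Hom_{\cat{G}}(M,X)$ for every $X\in\exinj_{\class{B}}$. Throughout I use that $\exinj_{\class{B}}$ is closed under shifts $\Sigma^n$, since exactness, $\class{B}$-acyclicity and having injective components are all shift invariant. Note that in Definition~\ref{def-G-inj-B-tate} the class $\class{W}$ is simply the left orthogonal $\leftperp{\exinj_{\class{B}}}$. No model structure is assumed, so I only use the orthogonality.

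\emph{Step 1: $\class{W}_{\cat{G}}$ equals the $\class{B}$-Tate trivial objects.} For any complex of injectives $X$, the isomorphism~(\ref{eq-iso}) from the proof of Theorem~\ref{them-A-acyclic-model} gives
$$\Ext^1_{\cha{G}}(S^0(M),\Sigma^n X)\cong H_{-1}[\Hom_{\cat{G}}(M,\Sigma^nX)],$$
and the right side is $H_{-1-n}[\Hom_{\cat{G}}(M,X)]$ up to sign conventions on the shift. Hence $S^0(M)\in\class{W}$ exactly when $H_k[\Hom_{\cat{G}}(M,X)]=0$ for all $k$ and all $X\in\exinj_{\class{B}}$. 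Here I use shift-closure of $\exinj_{\class{B}}$ to range over all degrees $k$. This is exactly $\class{B}$-Tate triviality.

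\emph{Step 2: $\class{B}$-Tate trivial equals $\leftperp{\class{GI}_{\class{B}}}$.} Let $X\in\exinj_{\class{B}}$. Exactness gives short exact sequences $0\to Z_{n+1}X\to X_{n+1}\to Z_nX\to 0$. Apply $\Hom_{\cat{G}}(M,-)$ and use that $X_{n+1}$ is injective, together with left exactness, which identifies $\Hom_{\cat{G}}(M,Z_nX)=Z_n\Hom_{\cat{G}}(M,X)$. This yields
$$\Ext^1_{\cat{G}}(M,Z_{n+1}X)\cong H_n[\Hom_{\cat{G}}(M,X)].$$
Every cycle $Z_kX$ of such an $X$ is Gorenstein $\class{B}$-injective, because it is $Z_0$ of a shift of $X$. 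Conversely, every $N\in\class{GI}_{\class{B}}$ is of the form $Z_0X$. So $M\in\leftperp{\class{GI}_{\class{B}}}$ if and only if all these $H_n$ vanish for all $X$, i.e.\ if and only if $M$ is $\class{B}$-Tate trivial. The only delicate point is keeping the homological indexing straight in this step and in Step~1. It is bookkeeping rather than a real obstacle.

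\emph{Step 3: thickness.} I would use the Tate trivial description. Closure under direct summands is immediate, since $\Hom_{\cat{G}}(M'\oplus M'',X)$ splits as a direct sum of complexes. For a short exact sequence $0\to M'\to M\to M''\to 0$ in $\cat{G}$ and $X\in\exinj_{\class{B}}$, injectivity of each $X_n$ makes
$$0\to\Hom_{\cat{G}}(M'',X)\to\Hom_{\cat{G}}(M,X)\to\Hom_{\cat{G}}(M',X)\to 0$$
a degreewise short exact sequence of complexes of abelian groups. The long exact homology sequence then gives the two-out-of-three property: whenever two of the three complexes are exact, so is the third. Hence $\class{W}_{\cat{G}}$ is thick.
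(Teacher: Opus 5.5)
Your proof is correct, but it orders the three descriptions differently from the paper.

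The paper first proves $\class{W}_{\cat{G}} = \leftperp{\class{GI}_{\class{B}}}$ using the cited isomorphism $\Ext^1_{\cat{G}}(M,Z_0X)\cong\Ext^1_{\cha{G}}(S^0(M),X)$, valid for any exact complex $X$. Only then does it pass to $\class{B}$-Tate triviality, via a dimension-shifting remark it calls ``easy to see''. For thickness, it first observes that $\class{W}$ itself is thick in $\cha{G}$, using $\Ext^1_{\cha{G}}(\Sigma^n(-),X)\cong H_{n-1}[\Hom_{\cat{G}}(-,X)]$, and then intersects with $\cat{G}$.

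You instead go from $\class{W}_{\cat{G}}$ directly to Tate triviality. For this you use only the isomorphism $(\dagger)$, which needs injective components but not exactness, together with shift-closure of $\exinj_{\class{B}}$. You then spell out the dimension shift $\Ext^1_{\cat{G}}(M,Z_{n+1}X)\cong H_n[\Hom_{\cat{G}}(M,X)]$, which needs both exactness and injective components.

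Composing your Step 2 isomorphism at $n=-1$ with $(\dagger)$ at degree $0$ recovers exactly the paper's cited isomorphism for $X\in\exinj_{\class{B}}$. So your argument is more self-contained and needs no external lemma beyond $(\dagger)$.

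Your thickness argument works entirely in $\cat{G}$, through the degreewise short exact sequence of Hom complexes and the long exact homology sequence. This is the same mechanism the paper uses, applied to objects rather than complexes. The paper's version has the small extra payoff of recording that $\class{W}$ is thick as a class of complexes. Your argument does not give that, and the lemma does not require it.
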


 \begin{proof}
We have  $M\in \leftperp{\class{GI}_{\class{B}}}$, if and only if,  $\Ext^1_{\cat{G}}(M, N) = 0$ for all $N \in \class{GI}_{\class{B}}$, if and only if,  $\Ext^1_{\cat{G}}(M, Z_0X) = 0$ for all $X \in \exinj_{\class{B}}$.
But there is a well-known isomorphism $$\Ext^1_{\cat{G}}(M, Z_0X) \cong \Ext^1_{\cha{G}}(S^0(M), X)$$
which holds whenever $X$ is an exact chain complex.  (For example, the isomorphism is proved in~\cite[Lemma~4.2]{gillespie-degreewise-model-strucs}.) So the previous conditions happen if and only if $\Ext^1_{\cha{G}}(S^0(M), X) = 0$ for all $X \in \exinj_{\class{B}}$, or, if and only if $S^0(M)\in \leftperp{\exinj_{\class{B}}} =\class{W}$. This is the case, by definition, if and only if, $M\in\class{W}_{\cat{G}}$.
This proves $\class{W}_{\cat{G}} = \leftperp{\class{GI}_{\class{B}}}$.

Next, note that $M\in\leftperp{\class{GI}_{\class{B}}}$ if and only if   $\Ext^1_{\cat{G}}(M, Z_nX) = 0$ for all $X \in  \exinj_{\class{B}}$, and all $n$.  Since $X$ is an exact complex of injectives it is easy to see that this is equivalent to saying  $M$ is  $\class{B}$-Tate trivial in the sense of Definition~\ref{def-G-inj-B-tate}.

For $X\in \exinj_{\class{B}}$, we have $\Ext^{1}_{\cha{G}}(\Sigma^{n}(-),X) \cong H_{n-1}[\Hom_{\cat{G}}(-,X)]$ (as in the proof of Theorem~\ref{them-A-acyclic-model}). It follows that $\class{W}$ and hence $\class{W}_{\cat{G}} = \class{W}\cap\cat{G}$  are both thick.
\end{proof}

\begin{definition}\label{definition-B Tate trivial-generators}
Let $\class{B}\subseteq \cat{G}$ be a class of objects.
If $\class{U}$  is a generating set for $\cat{G}$, and $\class{U}\subseteq \class{W}_{\class{G}}  \, (= \leftperp{\class{GI}_{\class{B}}}$), then we will call it a \emph{set of $\mathcal{B}$-Tate trivial generators}. In the special  case  of when $\class{B}$ is the class of all injectives we just call $\class{U}$ a set of \emph{Tate trivial generators}.
\end{definition}

The next lemma tells us that if  there is any hope for $(\leftperp{\class{GI}_{\class{B}}}, \class{GI}_{\class{B}})$ to be a complete cotorsion pair, then $\cat{G}$ must admit a  set $\class{U}$ of $\class{B}$-Tate trivial generators.
Equivalently, $\cat{G}$ must admit a set of generators $\class{U}$ as we assumed in  Proposition~\ref{them-exact-B-acyclic-models}.

\begin{lemma}\label{lemma-generators-cond}
Let $\class{B}\subseteq \cat{G}$ be a class of objects. The following statements are equivalent for a given set $\mathcal{U}$ of generators for $\cat{G}$. Moreover, the existence of such a generating set is necessary if $(\leftperp{\class{GI}_{\class{B}}}, \class{GI}_{\class{B}})$ is to be a complete cotorsion pair.
\begin{enumerate}
\item $\class{U}$ is a set of $\class{B}$-Tate trivial generators. 
\item Each $X \in  \exinj_{\class{B}}$ is $\class{U}$-acyclic.
\end{enumerate}
Finally, we note that these conditions are automatically satisfied (for any class $\class{B}$)
whenever $\class{U}$ is a set of generators of finite projective dimension.
\end{lemma}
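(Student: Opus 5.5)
The plan has three parts: prove (1)$\Leftrightarrow$(2), then necessity, then the finite projective dimension case.

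For (1)$\Leftrightarrow$(2) I will invoke Lemma~\ref{lemma-B-Tate-char}. That lemma shows $\class{W}_{\cat{G}}=\leftperp{\class{GI}_{\class{B}}}$ is exactly the class of $\class{B}$-Tate trivial objects, and $M$ is $\class{B}$-Tate trivial precisely when $\Hom_{\cat{G}}(M,X)$ is exact for every $X\in\exinj_{\class{B}}$. So $\class{U}\subseteq\class{W}_{\cat{G}}$ says exactly that every $X\in\exinj_{\class{B}}$ is $U$-acyclic for each $U\in\class{U}$, which is (2) by Definition~\ref{def-A-acyclic}. This part is essentially unwinding definitions.

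For necessity, assume $(\leftperp{\class{GI}_{\class{B}}},\class{GI}_{\class{B}})$ is complete. Fix any generating set $\class{V}=\{V_i\}$ of $\cat{G}$. For each $V_i$ I take a special $\leftperp{\class{GI}_{\class{B}}}$-precover, that is, a short exact sequence
$$0\to N_i\to W_i\xrightarrow{p_i} V_i\to 0$$
with $W_i\in\leftperp{\class{GI}_{\class{B}}}$ and $N_i\in\class{GI}_{\class{B}}$. Set $\class{U}=\{W_i\}$. Then $\class{U}$ is a set contained in $\class{W}_{\cat{G}}$. It generates $\cat{G}$ because $\bigoplus W_i\to\bigoplus V_i$ is an epimorphism (coproducts are exact in $\cat{G}$) and $\bigoplus V_i$ generates. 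Equivalently, any morphism $V_i\to M$ factors through the epi $p_i$, so every nonzero morphism out of some $V_i$ gives a nonzero morphism out of $W_i$. Only the existence of special precovers is used here; the special preenvelope half of completeness is not needed. This is the step needing the most care, but it is still routine.

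The last claim is Lemma~\ref{lemma-U-acyclicity}: if each $U_i$ has finite projective dimension, every exact complex of injectives is $\class{U}$-acyclic. In particular every $X\in\exinj_{\class{B}}$ is, giving (2) for any $\class{B}$, and hence (1).
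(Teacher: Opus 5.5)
Your proposal is correct and follows the paper's proof essentially step for step: (1)$\Leftrightarrow$(2) comes from Lemma~\ref{lemma-B-Tate-char}, necessity comes from replacing an arbitrary generating set by special $\leftperp{\class{GI}_{\class{B}}}$-precovers of its members, and the final claim comes from Lemma~\ref{lemma-U-acyclicity}. One wording fix: a morphism $g\colon V_i\to M$ does not ``factor through'' $p_i$; what you actually use is that $gp_i\neq 0$ whenever $g\neq 0$, because $p_i$ is an epimorphism.
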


\begin{proof}
The equivalence of the  conditions on $\class{U}$ follows immediately from Lemma~\ref{lemma-B-Tate-char}.
Next, assume  $(\leftperp{\class{GI}_{\class{B}}}, \class{GI}_{\class{B}})$ is a complete cotorsion pair.  Being Grothendieck, there does exist some set,  say  $\{G_i\}$, of generators for $\cat{G}$.  For each $i$, using  that $(\leftperp{\class{GI}_{\class{B}}}, \class{GI}_{\class{B}})$ has enough projectives, write  an epimorphism $W_i \twoheadrightarrow G_i$ with $W_i \in \leftperp{\class{GI}_{\class{B}}}$. Then $\{W_i\}$ must also be a generating set for $\cat{G}$. In particular, $\leftperp{\class{GI}_{\class{B}}}$ contains the generating set $\mathcal{U}:=\{W_i\}$.

The final statement was shown in Lemma~\ref{lemma-U-acyclicity}.
\end{proof}

The case we are interested in is when $\class{B} := \rightperp{\class{S}}$ is the right side of a cotorsion pair $(\class{A},\class{B})$ cogenerated by a set of objects $\class{S}$.
We can now state our main result.

\begin{theorem}\label{them-Goren-inj-cot}
Let $\cat{G}$ be a  Grothendieck category and $(\class{A}, \class{B})$ be a cotorsion pair that is cogenerated by a set. Then
 $(\leftperp{\class{GI}_{\class{B}}}, \class{GI}_{\class{B}})$ is a complete cotorsion pair if and only if  $\cat{G}$ admits a set $\class{U}$  of $\class{B}$-Tate trivial generators. 
 In this case,
 $(\class{W}_{\cat{G}}, \class{GI}_{\class{B}})$ is an injective  cotorsion pair cogenerated by a set. We call it the \textbf{Gorenstein $\class{B}$-injective cotorsion pair}. In particular, $\mathfrak{M} = (All, \class{W}_{\cat{G}}, \class{GI}_{\class{B}})$  represents a  cofibrantly generated and injective abelian model structure on $\cat{G}$. We call it  the \textbf{Gorenstein $\class{B}$-injective model structure} and it satisfies these properties:
\begin{enumerate}
\item $\textnormal{Ho}(\mathfrak{M})$ is well-generated, and the kernel of the triangulated localization functor $\gamma \colon \cat{G} \xrightarrow{} \textnormal{Ho}(\mathfrak{M})$ contains $\mathcal{U}$ and $\class{B}$ and is closed under direct limits.
\item For any other injective model structure $(All, \class{V}, \class{F})$ with $\class{B}\subseteq \class{V}$, we must have $\class{W}_{\cat{G}}\subseteq \class{V}$, equivalently, $\class{F}\subseteq \class{GI}_{\class{B}}$. 
\item The sphere functor $S^0(-) \colon \cat{G} \xrightarrow{} \cha{G}$ is a (left) Quillen equivalence from the Gorenstein $\class{B}$-injective model structure  to the exact $\class{B}$-acyclic model structure of  Corollary~\ref{cor-exact-injectives-B-acyclic}. (See Proposition~\ref{prop-quillen-equivalence}.)
\end{enumerate}
\end{theorem}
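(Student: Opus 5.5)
The necessity direction is already Lemma~\ref{lemma-generators-cond}, so the work is in the converse. Assume $\cat{G}$ has a set $\class{U}$ of $\class{B}$-Tate trivial generators. By Lemma~\ref{lemma-generators-cond}, every $X\in\exinj_{\class{B}}$ is then $\class{U}$-acyclic. Corollary~\ref{cor-exact-injectives-B-acyclic} now gives a set $\widehat{T}=T\cup\class{U}$ with $\class{I}_{\widehat{T}}=\exinj_{\class{B}}$. From the proof of Theorem~\ref{them-A-acyclic-model}, $(\class{W},\exinj_{\class{B}})$ is an injective cotorsion pair in $\cha{G}$, cogenerated by the set
\[
\class{S}=\{D^n(U_i/C)\}\cup\{S^n(A)\mid A\in\widehat{T}\}.
\]
The task is to move this down to $\cat{G}$.

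\textbf{Cogenerating set in $\cat{G}$.} I would use $\class{S}_0=\{U_i/C\}\cup\{A\mid A\in\widehat{T}\}$, where $U_i/C$ ranges over quotients of the generators. I claim $\rightperp{\class{S}_0}=\class{GI}_{\class{B}}$.
\begin{itemize}
\item If $N=Z_0X$ with $X\in\exinj_{\class{B}}$, then $\Ext^1_{\cat{G}}(A,Z_0X)\cong\Ext^1_{\cha{G}}(S^0(A),X)=0$ for $A\in\widehat{T}$. Also $\Ext^1(U_i/C,N)=0$ whenever $U_i/C\in\leftperp{\class{GI}_{\class{B}}}=\class{W}_{\cat{G}}$. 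This last point needs care, because quotients of generators need not be Tate trivial. So I would instead cogenerate by $\{S^0(A)\}$ together with the sets $\{S^n(A)\}$ shifted into degree $0$ via syzygies. Alternatively, and more cleanly, I would not insist on a naive set. Completeness can be obtained directly from the model structure, and a cogenerating set can then be extracted by the standard fact that a complete cotorsion pair whose left class is deconstructible is cogenerated by a set.
\end{itemize}

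\textbf{Completeness via the chain-level pair.} Given $M\in\cat{G}$, I would take an approximation sequence $0\to S^0(M)\to X\to W\to 0$ with $X\in\exinj_{\class{B}}$ and $W\in\class{W}$. Taking $0$-cycles gives $0\to M\to Z_0X\to Z_0W'$, and I need to control $Z_0$ of $W$. The cleaner route is the standard trick: $S^0(M)\to X$ yields $M\to Z_0X$ with $Z_0X\in\class{GI}_{\class{B}}$. One then checks that the cokernel of this map is in $\class{W}_{\cat{G}}$, using the thickness of $\class{W}$ and $\class{W}_{\cat{G}}$ (Lemma~\ref{lemma-B-Tate-char}), the fact that $\class{W}$ contains all contractibles, and a comparison of $S^0(M)$ with $X$ through the brutal truncation and the $D^n$'s.

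\textbf{Getting enough projectives, and the injective pair.} This is where $\class{U}\subseteq\class{W}_{\cat{G}}$ enters. I expect this step, producing the special precovers $W\twoheadrightarrow M$, to be the main obstacle; it is where Tate triviality of the generators is essential. By Salce's lemma, enough projectives follows from enough injectives in a Grothendieck category, provided I show $\class{W}_{\cat{G}}$ contains a generating set. The generators $\class{U}$ supply exactly this. Then $\class{W}_{\cat{G}}$ is thick (Lemma~\ref{lemma-B-Tate-char}) and $\class{W}_{\cat{G}}\cap\class{GI}_{\class{B}}=\class{I}$, since the intersection consists of injectives split off their totally acyclic resolutions. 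These two facts make it an injective cotorsion pair, and Gillespie's Hovey correspondence gives $\mathfrak{M}$.

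\textbf{Properties (1)--(3).}
\begin{itemize}
\item For (1), well-generation follows because the pair is cogenerated by a set (via deconstructibility and Šťovíček's well-generation theorem). The kernel of $\gamma$ is $\class{W}_{\cat{G}}$, which contains $\class{B}$ by Theorem~\ref{them-A-acyclic-model}(2) and contains $\class{U}$ by assumption. It is closed under direct limits because $\class{W}$ is (Theorem~\ref{them-A-acyclic-model}(1)).
\item For (2), suppose $\class{B}\subseteq\class{V}$, and let $F\in\class{F}$. Cosyzygy resolutions of $F$ by injectives, together with $\Ext^{\geq1}(\class{V},\class{F})=0$ (a consequence of thickness), produce an exact $\class{B}$-acyclic complex. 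Here the left half comes from right injective resolutions, and the right half from completeness of $(\class{V},\class{F})$ applied repeatedly, with kernels staying in $\class{F}$. This shows $F\in\class{GI}_{\class{B}}$.
\item For (3), $S^0$ preserves cofibrations (monos) and trivial cofibrations, by the $\Ext$ isomorphism. The derived unit and counit are checked on fibrant objects $X\mapsto Z_0X$; this is the content deferred to Proposition~\ref{prop-quillen-equivalence}.
\end{itemize}
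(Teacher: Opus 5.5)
You take a different route to completeness than the paper, and it is mostly sound. From $0\to S^0(M)\to X\to W\to 0$ you get $0\to M\to Z_0X\to Z_0W\to 0$. The truncation argument puts $Z_0W$ in $\class{W}_{\cat{G}}$; this is the paper's Lemma~\ref{lem-cycles-of-W}, and besides disks it also needs that spheres on injectives lie in $\class{W}$ and that $\class{W}$ is closed under transfinite extensions. Salce's pushout along $\bigoplus U\twoheadrightarrow M$, $U\in\class{U}$, then gives special precovers. Since $\class{W}_{\cat{G}}=\leftperp{\class{GI}_{\class{B}}}$ is a left orthogonal class, this proves $(\class{W}_{\cat{G}},\rightperp{\class{W}_{\cat{G}}})$ is a complete cotorsion pair without \v{S}\v{t}ov\'{\i}\v{c}ek's deconstruction theorem. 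However, you then treat $(\class{W}_{\cat{G}},\class{GI}_{\class{B}})$ as a cotorsion pair without proving $\rightperp{\class{W}_{\cat{G}}}\subseteq\class{GI}_{\class{B}}$. Your preenvelopes only show that objects of $\rightperp{\class{W}_{\cat{G}}}$ are direct summands of objects of $\class{GI}_{\class{B}}$, and summand-closure of $\class{GI}_{\class{B}}$ is not automatic. The fix is your own argument from item (2), applied with $\class{V}=\class{W}_{\cat{G}}$ (this is Lemma~\ref{lemma-B-cot-pair}). It must be invoked here, before calling the pair injective, not only for property (2).

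The genuine gap is the assertion that the pair is cogenerated by a set; ``cofibrantly generated'' and the well-generation in (1) rest on it as well. You reduce it to deconstructibility of $\class{W}_{\cat{G}}$, but that is the entire content of the claim, and nothing in your argument produces it. Your first candidate $\{U_i/C\}\cup\widehat{T}$ fails for a more basic reason: by Baer's criterion, the right orthogonal of $\{U_i/C\}$ alone is already just the injectives. The missing idea, which is how the paper argues, runs as follows. The chain-level pair $(\class{W},\exinj_{\class{B}})$ is cogenerated by the explicit set $\class{S}$, and $\class{W}$ contains the generators $D^n(U_i)$. So by \v{S}\v{t}ov\'{\i}\v{c}ek there is a set $\class{S}'\supseteq\class{S}$ with $\class{W}=\textnormal{Filt}(\class{S}')$. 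Every member of a filtration of a sphere $S^0(M)$ is a subcomplex of $S^0(M)$, hence itself a sphere. Therefore $\class{W}_{\cat{G}}=\textnormal{Filt}(\class{S}'_{\cat{G}})$ with $\class{S}'_{\cat{G}}=\{M : S^0(M)\in\class{S}'\}$, and Eklof's lemma gives $\rightperp{\class{W}_{\cat{G}}}=\rightperp{(\class{S}'_{\cat{G}})}$. Because $\class{U}\subseteq\class{W}_{\cat{G}}$, this pair is cofibrantly generated and hence functorially complete. With this step in hand your Salce argument becomes unnecessary; without it you have completeness but not the cogenerating set.
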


Our proof of Theorem~\ref{them-Goren-inj-cot} will appear after we  obtain two easy lemmas that will be used in the proof.

\begin{lemma}\label{lemma-W-perp}
$\class{GI}_{\class{B}} \subseteq \rightperp{\class{W}_{\cat{G}}}$.
\end{lemma}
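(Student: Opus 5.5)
We must show that $\Ext^1_{\cat{G}}(M,N)=0$ whenever $M\in\class{W}_{\cat{G}}$ and $N\in\class{GI}_{\class{B}}$. This is essentially a restatement of Lemma~\ref{lemma-B-Tate-char}, which identifies $\class{W}_{\cat{G}}$ with $\leftperp{\class{GI}_{\class{B}}}$. Once that identity holds, the inclusion $\class{GI}_{\class{B}}\subseteq\rightperp{(\leftperp{\class{GI}_{\class{B}}})}$ is the general fact that every class lies in the right orthogonal of its own left orthogonal. So the proof will simply invoke Lemma~\ref{lemma-B-Tate-char} and conclude.

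For a self-contained argument, I would unwind the definitions. Take $M$ with $S^0(M)\in\class{W}=\leftperp{\exinj_{\class{B}}}$, and take $N=Z_0X$ with $X\in\exinj_{\class{B}}$. Since $X$ is exact, the isomorphism $\Ext^1_{\cat{G}}(M,Z_0X)\cong\Ext^1_{\cha{G}}(S^0(M),X)$ from~\cite[Lemma~4.2]{gillespie-degreewise-model-strucs} applies. The right-hand side vanishes because $S^0(M)\in\class{W}$ and $X\in\exinj_{\class{B}}$, and therefore $\Ext^1_{\cat{G}}(M,N)=0$.

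There is no real obstacle here. The only care needed is that the cited isomorphism requires exactness of $X$, and this holds by the definition of $\exinj_{\class{B}}$. Note also that the lemma does not require $\class{W}$ to come from a model structure: only the definition $\class{W}=\leftperp{\exinj_{\class{B}}}$ is used.
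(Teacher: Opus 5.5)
Your proposal is correct and matches the paper's proof: the paper likewise invokes Lemma~\ref{lemma-B-Tate-char} to get $\class{W}_{\cat{G}} = \leftperp{\class{GI}_{\class{B}}}$ and then concludes from $\class{GI}_{\class{B}} \subseteq \rightperp{(\leftperp{\class{GI}_{\class{B}}})}$. Your self-contained unwinding via $\Ext^1_{\cat{G}}(M,Z_0X)\cong\Ext^1_{\cha{G}}(S^0(M),X)$ for exact $X$ is exactly the relevant half of that lemma's proof, and it is also fine.
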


\begin{proof}
We have $\class{W}_{\cat{G}} = \leftperp{\class{GI}_{\class{B}}}$ by Lemma~\ref{lemma-B-Tate-char}. So
$\class{GI}_{\class{B}} \subseteq \rightperp{(\leftperp{\class{GI}_{\class{B}}})} =  \rightperp{\class{W}_{\cat{G}}}$.
\end{proof}

For the reverse containment, and the `best injective model' claim in property (2) of Theorem~\ref{them-Goren-inj-cot}, we will use:

\begin{lemma}\label{lemma-B-cot-pair}
Let $\class{V}$ be any thick class of $\class{G}$-objects containing $\class{B}$. If $(\class{V}, \rightperp{\class{V}})$ is known to be a complete cotorsion pair, then this is an injective cotorsion pair with the property that $\rightperp{\class{V}}  \subseteq \class{GI}_{\class{B}}$.
\end{lemma}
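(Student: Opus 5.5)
First I would check that $(\class{V}, \rightperp{\class{V}})$ is an injective cotorsion pair. By \cite[\S8.1]{gillespie-book} (or directly from the definition), a complete cotorsion pair $(\class{V},\class{F})$ is injective exactly when $\class{V}$ is thick and $\class{V}\cap\class{F}$ is the class $\class{I}$ of injective objects. Thickness of $\class{V}$ is assumed. Every injective $I$ lies in $\rightperp{\class{V}}$. Since $\class{B}=\rightperp{\class{S}}$ contains all injectives and $\class{B}\subseteq\class{V}$, we also get $I\in\class{V}$. Conversely, take $F\in\class{V}\cap\rightperp{\class{V}}$ and embed it $F\rightarrowtail I \twoheadrightarrow C$ with $I$ injective. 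Thickness gives $C\in\class{V}$, so the sequence splits because $F\in\rightperp{\class{V}}$. Hence $F$ is a summand of $I$ and is injective.

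Next, given $F\in\rightperp{\class{V}}$, I would build an exact complex of injectives $X$ with $Z_0X=F$ and all cycles in $\rightperp{\class{V}}$.

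\textbf{Right half.} Take an injective envelope $F\rightarrowtail I^0$. The cokernel lies in $\rightperp{\class{V}}$: the class $\rightperp{\class{V}}$ is coresolving, since $\Ext^2(V,F)\cong\Ext^1(\Omega V,F)$ and the syzygy $\Omega V$ stays in $\class{V}$. To justify the latter, use completeness to write $0\to V'\to W\to V\to 0$ with $W\in\class{V}$ and $V'\in\rightperp{\class{V}}$; then thickness gives what we need. Alternatively, invoke the standard fact that $\class{V}$ being thick makes the cotorsion pair hereditary. Iterating gives the right half.

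\textbf{Left half.} Use enough projectives of the cotorsion pair. Write $0\to K\to V\to F\to 0$ with $V\in\class{V}$ and $K\in\rightperp{\class{V}}$. Since $\rightperp{\class{V}}$ is extension closed, $V\in\class{V}\cap\rightperp{\class{V}}=\class{I}$, so $V$ is injective. Iterate on $K$.

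Splicing the two halves gives an exact complex of injectives $X$ whose cycles $Z_nX$ all lie in $\rightperp{\class{V}}$.

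Finally I would check that $X$ is $\class{B}$-acyclic. For $B\in\class{B}\subseteq\class{V}$, each short exact sequence $0\to Z_nX\to X_n\to Z_{n-1}X\to 0$ stays exact after applying $\Hom_{\cat{G}}(B,-)$, because $\Ext^1(B,Z_nX)=0$. Splicing these exact sequences shows $\Hom_{\cat{G}}(B,X)$ is exact. Thus $X\in\exinj_{\class{B}}$ and $F=Z_0X\in\class{GI}_{\class{B}}$.

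The main obstacle I expect is the heredity step: the first injective cokernel must stay in $\rightperp{\class{V}}$, which needs the thickness-plus-completeness argument above rather than mere orthogonality.
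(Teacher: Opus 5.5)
Your proposal follows the paper's proof step for step. You identify $\class{V}\cap\rightperp{\class{V}}$ with the injectives using thickness and $\class{I}\subseteq\class{B}\subseteq\class{V}$. You build the right half by an injective coresolution using heredity, and the left half by iterated special $\class{V}$-precovers, which are forced to be injective. Then you splice and read off $\class{B}$-acyclicity from $\Ext^1(\class{V},Z_nX)=0$. For heredity the paper just cites \cite[Lemma~2.3]{gillespie-recollement}, which is your ``alternatively'' clause, so the proof goes through that way.

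Your direct justification of heredity does not work as written, though, and since you name it as the key step it should be corrected. The dimension shift $\Ext^2(V,F)\cong\Ext^1(\Omega V,F)$ along $0\to\Omega V\to W\to V\to 0$ needs $\Ext^2(V,F)\to\Ext^2(W,F)$ to vanish; in modules you would take $W$ projective, but $\cat{G}$ need not have any nonzero projectives. For $W\in\class{V}$ you only know $\Ext^1(W,F)=0$, and $\Ext^2(W,F)=0$ is an instance of what you are proving. Worse, a special $\class{V}$-precover of $V\in\class{V}$ carries no information: thickness puts $V'$ in $\class{V}\cap\rightperp{\class{V}}$, so $V'$ is injective and the sequence splits.

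The fix is to apply completeness to the middle term of an extension rather than to $V$. Take $0\to C\to E\to V\to 0$ with $C=I^0/F$ and $V\in\class{V}$.
\begin{enumerate}
\item Choose an epimorphism $V_0\twoheadrightarrow E$ with $V_0\in\class{V}$, and set $K_0:=\Ker(V_0\to V)$, which lies in $\class{V}$ by thickness.
\item The induced map $h\colon K_0\to C$ exhibits the given extension as the pushout along $h$ of $0\to K_0\to V_0\to V\to 0$.
\item Since $\Ext^1(K_0,F)=0$, $h$ lifts to a map $K_0\to I^0$. By injectivity of $I^0$ this extends over $K_0\rightarrowtail V_0$, so $h$ extends over $K_0\rightarrowtail V_0$ as well.
\item Hence the pushout extension splits, giving $\Ext^1(V,C)=0$.
\end{enumerate}
With this argument, or the citation, your proof is complete.
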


\begin{proof}
Assume we have a complete cotorsion pair $(\class{V}, \rightperp{\class{V}})$ where $\class{V}$ is some thick class containing $\class{B}$. We note that the cotorsion pair is necessarily hereditary; (this follows from the completeness and the thickness of $\class{V}$. See~\cite[Lemma~2.3]{gillespie-recollement}.) Since $\class{B}$ contains all of the injective objects, so does $\class{V}$.
Then by a standard argument  we have that  $(\class{V}, \rightperp{\class{V}})$ is in fact an injective cotorsion pair. That is,
$\class{V}\cap\rightperp{\class{V}}$ is precisely the class of injective objects; see~\cite[Prop.~3.6(3)]{gillespie-recollement}. Now the argument given in~\cite[Them.~5.2]{gillespie-recollement} generalizes in a straightforward way to show that  each $N \in \rightperp{\class{V}}$ is equal to the 0-cycle of some exact $\class{B}$-acyclic complex of injectives. The argument goes like this: Take a standard injective  coresolution of $N$. By the hereditary condition, each cokernel in the coresolution is again in $\rightperp{\class{V}}$.  To build the resolution the other way, start by taking a special $\class{V}$-precover of $N$.  Then the precover must be an injective object and the kernel again is in $\rightperp{\class{V}}$. So we may iterate this process to get an injective \emph{resolution} of $N$, again with each kernel in $\rightperp{\class{V}}$. Pasting together the coresolution and the resolution we obtain an exact complex of injectives $X$, with $N = Z_0X$, and for which $\Hom_{\cat{G}}(M,X)$ is exact whenever $M \in\class{V}$. We are done since $\class{B}\subseteq \class{V}$.
\end{proof}

\begin{proof}[Proof of Theorem~\ref{them-Goren-inj-cot}]
($\implies$) This is Lemma~\ref{lemma-generators-cond}.

\noindent ($\impliedby$)  Assume we have a set  $\class{U}$ of $\class{B}$-Tate trivial generators, or equivalently, that each $X \in  \exinj_{\class{B}}$ is $\class{U}$-acyclic. Then  
we have the exact $\class{B}$-acyclic  model structure of Corollary~\ref{cor-exact-injectives-B-acyclic},
$$\mathfrak{M}_{\widehat{T}} = (All, \class{W}, \exinj_{\class{B}}).$$
Its construction shows  the cotorsion pair to be cogenerated by a set of chain complexes $\class{S}$. (Explicitly, we have taken  $T$ to be a set as guaranteed by Corollary~\ref{cor-rightperp-accessible}  and then set $\widehat{T} := T\cup\mathcal{U}$. Then our explicit cogenerating set goes back to the proof of
Theorem~\ref{them-A-acyclic-model}, and in this case becomes
\[
\class{S} = \{D^{n} (U_i/C) \,|\, U_i \in\mathcal{U} , \ \forall C \subseteq U_i\} \bigcup   \{S^{n} (A_j) \,|\, A_j \in \widehat{T} \}.)
\]
It follows from~\cite[Remark~1.8 and Prop.~2.9(1)]{stovicek-deconstructible} that there exists a set $\class{S}' \supseteq \class{S}$ such that $\textnormal{Filt}(\class{S}') = \class{W}$. Here, $\textnormal{Filt}(\class{S}')$ denotes the class of all transfinite extensions of objects in $\class{S}'$.
Now let $\class{S}'_{\cat{G}} := \class{S}'\cap\cat{G}$, or more formally
$$\class{S}'_{\cat{G}} := \{\mkern2mu M \in \cat{G} \,|\, S^0(M) \in \class{S}' \mkern2mu \}.$$
 Then
$\class{S}'_{\cat{G}} \subseteq \class{W}_{\cat{G}}$. If $M\in
\class{W}_{\cat{G}}$, then $S^{0}(M) \in \cat{W}$, and so the complex $S^0(M)$ must be
a transfinite extension of objects of $\class{S}'$.  However, each complex
$X_{\alpha}$ in this transfinite extension is necessarily a subobject of
$S^{0}(M)$, and as such must again be of the form $S^{0}(M_{\alpha})$ for some object $M_{\alpha}$.
It follows that $M$ is a transfinite extension of objects in $\class{S}'_{\cat{G}}$, and hence $\class{W}_{\cat{G}} = \textnormal{Filt}(\class{S}'_{\cat{G}})$. So by ~\cite[Cor.~9.4]{gillespie-book} (Eklof's Lemma) we have $\class{W}_{\cat{G}} = \textnormal{Filt}(\class{S}'_{\cat{G}}) \subseteq   \leftperp{(\rightperp{\class{S'_{\cat{G}}}})}$. Since our generating set $\class{U}$ is contained in $\class{W}_{\cat{G}}$, we have that
$(\leftperp{(\rightperp{\class{S'_{\cat{G}}}})},  \rightperp{\class{S'_{\cat{G}}}})$ is a cofibrantly generated cotorsion pair; see~\cite[Def.~9.30 and Remark~9.31]{gillespie-book}. Since we already know that $\class{W}_{\cat{G}} = \textnormal{Filt}(\class{S}'_{\cat{G}})$ is  closed under direct summands, we see from~\cite[Them.~9.34(2)]{gillespie-book} that $$(\leftperp{(\rightperp{\class{S'_{\cat{G}}}})},  \rightperp{\class{S'_{\cat{G}}}}) =  (\class{W}_{\cat{G}}, \rightperp{\class{W}_{\cat{G}}})$$ and that this is a (functorially) complete  cotorsion pair.
 It now follows from Lemmas~\ref{lemma-W-perp} and~\ref{lemma-B-cot-pair} that  $\rightperp{\class{W}_{\cat{G}}} = \class{GI}_{\class{B}}$ and that this is an injective cotorsion pair.

Finally, we comment on the enumerated properties. First, that $\cat{G}$ is well-generated follows from a general result (\cite[Cor.~12.24]{gillespie-book}) and the notion of a triangulated localization is discussed in~\cite[\S6.7]{gillespie-book}.
Property (2) follows from Lemma~\ref{lemma-B-cot-pair}. Finally, the Quillen equivalence stated in (3) will be proved in  Proposition~\ref{prop-quillen-equivalence}.
\end{proof}

In particular, we have  the special case  ($\class{B}$ = all injectives) of the absolute  Gorenstein injectives. We simply denote this class by $\class{GI}$.   

\begin{corollary}\label{cor-Goren-inj}
For any Grothendieck category $\cat{G}$,  we have  that $(\leftperp{\class{GI}}, \class{GI})$ is a complete cotorsion pair and enjoys all of the properties listed in Theorem~\ref{them-Goren-inj-cot} if and only if $\cat{G}$ admits a set of Tate trivial generators.
In particular, this is the case for  any locally finite dimensional Grothendieck category $\cat{G}$.
 \end{corollary}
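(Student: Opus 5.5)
The plan is to specialize Theorem~\ref{them-Goren-inj-cot} to the case where $\class{B}=\class{I}$ is the class of all injective objects of $\cat{G}$. To apply the theorem we must check that $(\class{A},\class{I})$, with $\class{A}=\cat{G}$, is a cotorsion pair cogenerated by a set. This follows from Baer's criterion for Grothendieck categories \cite[Prop.~V.2.9]{stenstrom}. Fix a generating set $\{U_i\}$. An object $E$ is injective if and only if $\Ext^1_{\cat{G}}(U_i/C,E)=0$ for every subobject $C\subseteq U_i$ and every $i$. Hence $\class{I}=\rightperp{\class{S}}$ for the set $\class{S}=\{U_i/C\}$, which is also the cogenerating set already used in the proof of Theorem~\ref{them-A-acyclic-model}. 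The left orthogonal of $\class{I}$ is all of $\cat{G}$, so $(\cat{G},\class{I})$ is indeed a cotorsion pair cogenerated by a set.

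With $\class{B}=\class{I}$, the class $\exinj_{\class{B}}$ is exactly $\toinj$, the class of totally acyclic complexes of injectives. So $\class{GI}_{\class{B}}$ is exactly the class $\class{GI}$ of Gorenstein injective objects defined at the start of Section~\ref{sec-Goren-inj}. Likewise, $\class{B}$-Tate trivial generators are exactly Tate trivial generators, by Definition~\ref{definition-B Tate trivial-generators}. Theorem~\ref{them-Goren-inj-cot} then states directly that $(\leftperp{\class{GI}},\class{GI})$ is complete, with all the listed properties, if and only if a set of Tate trivial generators exists. This gives the first assertion.

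For the final sentence, suppose $\cat{G}$ is locally finite dimensional, with a generating set $\class{U}$ of objects of finite projective dimension. By Lemma~\ref{lemma-U-acyclicity}, every exact complex of injectives is $\class{U}$-acyclic, and in particular every $X\in\toinj$ is. By Lemma~\ref{lemma-generators-cond}, condition (2) implies (1), so $\class{U}$ is a set of Tate trivial generators, and the equivalence above applies.

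There is no real obstacle. The only point needing care is to confirm that the injective class really arises as $\rightperp{\class{S}}$ for a \emph{set} $\class{S}$, so that Theorem~\ref{them-Goren-inj-cot} applies, and this is exactly what Baer's criterion provides.
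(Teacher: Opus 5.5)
Your proposal is correct and is essentially how the paper obtains this corollary: it is the special case $\class{B}=\class{I}$ of Theorem~\ref{them-Goren-inj-cot}, with $\class{I}=\rightperp{\{U_i/C\}}$ by Baer's criterion so that the theorem applies. The locally finite dimensional case then follows from Lemma~\ref{lemma-U-acyclicity} combined with Lemma~\ref{lemma-generators-cond}.
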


\section{The Quillen equivalence and Gorenstein injective envelopes}

Continuing our work in Section~\ref{sec-Goren-inj},  let $\cat{G}$ be a  Grothendieck category and $(\class{A}, \class{B})$ be a cotorsion pair that is cogenerated by a set.
We still need to prove part (3) of Theorem~\ref{them-Goren-inj-cot}.  Of course we also wish to state the  existence of  Gorenstein injective envelopes.
So  continue throughout this section to  assume  that  $\cat{G}$ admits a set $\class{U}$ of  $\class{B}$-Tate trivial generators. Thus  $(\class{W}_{\cat{G}}, \class{GI}_{\class{B}})$  is an   injective cotorsion pair.

\subsection{The Quillen equivalence}
We now prove  part (3) of Theorem~\ref{them-Goren-inj-cot}, which asserts that  the Gorenstein $\class{B}$-injective model structure  is Quillen equivalent to the exact $\class{B}$-acyclic model structure of  Corollary~\ref{cor-exact-injectives-B-acyclic}. The main tool for this is the following lemma which relates the trivial objects in the two model structures.

\begin{lemma}\label{lem-cycles-of-W}
 Let $\mathfrak{M}_{\widehat{T}} = (All, \class{W}, \exinj_{\class{B}})$ be the exact $\class{B}$-acyclic model structure of  Corollary~\ref{cor-exact-injectives-B-acyclic}.
Suppose $W$ is a complex with $H_{i}W=0$
for $i<0$ and $W_{i} \in \class{B}$ for $i>0$.  Then $W \in\class{W}$  if and only if $Z_0W \in \class{W}_{\cat{G}}$. I.e., $S^0(Z_0W)\in \class{W}$.
\end{lemma}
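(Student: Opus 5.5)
The plan is to split $W$ into its positive and non-positive parts and to handle the two parts separately using the characterization of $\class{W}$ in Theorem~\ref{them-A-acyclic-model}(3). By that result, a complex $V$ is in $\class{W}$ if and only if the total hom complex $\Hom_{\cat{G}}(V,X)$ is exact for all $X\in\exinj_{\class{B}}$. We also know that $\class{W}$ is thick.

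First consider the good truncation. There is a short exact sequence of complexes
$$0 \to W' \to W \to W'' \to 0,$$
where $W'$ is the truncation $\cdots \to W_2 \to W_1 \to Z_0W \to 0$, concentrated in degrees $\geq 0$. The quotient $W''$ is the complex $0\to W_0/Z_0W \to W_{-1} \to W_{-2}\to\cdots$. Since $H_iW=0$ for $i<0$, the complex $W''$ is exact: the map $W_0/Z_0W \to W_{-1}$ is monic, and exactness below comes from $H_iW=0$ for $i<0$. So I first show that every bounded above exact complex lies in $\class{W}$.

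For this, note that $\Hom_{\cat{G}}(W'',X)$ is exact whenever $W''$ is exact and bounded above and $X$ is an exact complex of injectives. The standard argument is that, for $X$ exact with injective components, $X$ is K-injective relative to bounded above exact complexes, so every chain map $\Sigma^n W''\to X$ is null homotopic. One builds the homotopy inductively from the top degree down, using injectivity of $X_i$ and exactness of $W''$. Thus $W''\in\class{W}$, and by thickness $W\in\class{W}$ if and only if $W'\in\class{W}$.

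Next I compare $W'$ with $S^0(Z_0W)$. There is a short exact sequence
$$0\to S^0(Z_0W)\to W' \to W''' \to 0,$$
where $W'''$ is $\cdots\to W_2\to W_1\to 0$, a bounded below complex with all components in $\class{B}$, placed in degrees $\geq 1$. I claim that $W'''\in\class{W}$. Each sphere $S^n(B)$ with $B\in\class{B}$ lies in $\class{W}$, because $X\in\exinj_{\class{B}}$ is $\class{B}$-acyclic, by (\ref{eq-iso}) and Theorem~\ref{them-A-acyclic-model}(2). Hence every bounded complex with components in $\class{B}$ lies in $\class{W}$, by finite extensions along the brutal truncations. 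The complex $W'''$ is the direct limit, in fact the union, of its brutal truncations $\sigma_{\leq k}W'''$; these are subcomplexes since $W'''$ is bounded below. Because $\class{W}$ is closed under direct limits by Theorem~\ref{them-A-acyclic-model}(1), applied via Corollary~\ref{cor-exact-injectives-B-acyclic}, we get $W'''\in\class{W}$. Thickness then gives $W'\in\class{W}$ if and only if $S^0(Z_0W)\in\class{W}$, that is, $Z_0W\in\class{W}_{\cat{G}}$.

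The main point requiring care is the claim that bounded above exact complexes are in $\class{W}$, specifically the inductive construction of the null homotopy. If that argument causes trouble, I would instead write $W''$ as a direct limit of bounded exact complexes. Each of these is a finite extension of disks $D^n(M)$, and disks are contractible, hence in $\class{W}$. Closure of $\class{W}$ under direct limits and extensions then gives the claim. This route is cleaner, using the truncations $0\to W''_0\to\cdots\to W''_{-k+1}\to B_{-k}W''\to0$, and I would adopt it.
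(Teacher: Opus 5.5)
Your proposal is correct and follows essentially the paper's proof. You use the same two short exact sequences, $0\to Y\to W\to W/Y\to 0$ and $0\to S^0(Z_0W)\to Y\to Y/S^0(Z_0W)\to 0$, together with thickness of $\class{W}$ and the facts that bounded above exact complexes and bounded below complexes with entries in $\class{B}$ are trivial. The only cosmetic difference is that you get those two facts as countable unions of bounded pieces (finite extensions of disks, respectively spheres $S^n(B)$) using closure of $\class{W}$ under direct limits, whereas the paper describes the same filtrations as transfinite extensions.
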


\begin{proof}
This is the analog of~\cite[Lemma~5.1]{bravo-gillespie-hovey}, but we give an alternate proof.  
 By properties previously shown,  we have: (i) any disk $D^n(M)$ on any object $M\in\cat{G}$ is in the class $\class{W}$, and (ii) any sphere $S^n(B)$, for any object $B \in \class{B}$, is also in $\class{W}$. From (i) we infer that any bounded above exact complex is trivial, as any such complex is a transfinite extension of disks. From (ii) we infer that any bounded below complex of objects from $\class{B}$ is trivial, as any such complex is a transfinite extension of spheres $S^n(B_n)$ with each $B_n\in\class{B}$.  Now the given chain complex $W$ has a subcomplex $Y \subseteq W$, where $Y$ is the bounded below complex $\cdots \xrightarrow{} W_2 \xrightarrow{} W_1 \xrightarrow{} Z_0W \xrightarrow{} 0$. Then note that $W/Y$ is (isomorphic to) the complex $0 \xrightarrow{} Z_{-1}W \xrightarrow{} W_{-1} \xrightarrow{} W_{-2} \xrightarrow{} \cdots$, which is bounded above and exact and so is trivial in the exact $\class{B}$-acyclic model structure. Therefore, since the class of trivial complexes is thick, $W$ is trivial if and only if $Y$ is trivial. But we have another subcomplex, $S^0(Z_0W) \subseteq Y$, whose quotient is a bounded below complex of objects from $\class{B}$. Thus $Y$ (and hence $W$) is trivial if and only if  $S^0(Z_0W)$ is trivial. That is,  if and only if $Z_0W \in \class{W}_{\cat{G}}$.
\end{proof}

\begin{proposition}\label{prop-quillen-equivalence}
The sphere functor $S^0(-) \colon \cat{G} \xrightarrow{} \cha{G}$ is a (left) Quillen equivalence from the Gorenstein $\class{B}$-injective model structure of Theorem~\ref{them-Goren-inj-cot} to the exact $\class{B}$-acyclic model structure of  Corollary~\ref{cor-exact-injectives-B-acyclic}. Its right adjoint (Quillen inverse) is given by the 0-cycles functor, $Z_0(-)$.
Therefore, the associated total derived adjunction is a triangulated equivalence between their homotopy categories.
\end{proposition}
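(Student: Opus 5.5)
We first check that $S^0 \dashv Z_0$ is an adjunction $\cat{G} \rightleftarrows \cha{G}$. A chain map $S^0(M) \to Y$ is the same thing as a map $M \to Y_0$ whose composite with $d_0$ is zero, that is, a map $M \to Z_0Y$.

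Next we check that the adjunction is Quillen. In both model structures every object is cofibrant and cofibrations are the monomorphisms, and $S^0$ preserves monomorphisms. So it suffices to show that $S^0$ preserves trivial cofibrations. In an injective abelian model structure these are exactly the monomorphisms with trivial cokernel. If $M \rightarrowtail N$ is monic with cokernel $C\in\class{W}_{\cat{G}}$, then $S^0$ of it is monic with cokernel $S^0(C)$, and $S^0(C)\in\class{W}$ by the definition of $\class{W}_{\cat{G}}$. Alternatively, the right adjoint $Z_0$ preserves fibrations and trivial fibrations: trivial fibrations are the epimorphisms with kernel in $\class{W}\cap\exinj_{\class{B}}$, i.e. with injective contractible kernel, and these go to epis with injective kernel. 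The left-adjoint argument is cleaner, so we use that one.

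For the Quillen equivalence we use the standard criterion (Hovey). Since every object of $\cat{G}$ is cofibrant, it suffices to show two things: that $Z_0$ reflects weak equivalences between fibrant objects, and that for each $M$ the composite $M \to Z_0 R S^0(M)$ is a weak equivalence, where $S^0(M)\to RS^0(M)$ is a fibrant replacement.

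For the reflection, let $f\colon X\to Y$ be a map in $\exinj_{\class{B}}$ with $Z_0 f$ a weak equivalence in $\cat{G}$. We may factor $f$ as a trivial cofibration followed by a fibration, i.e. an epimorphism with fibrant kernel $K\in\exinj_{\class{B}}$. Exactness of the complexes gives a short exact sequence on $Z_0$, so the problem reduces to showing that if $K\in\exinj_{\class{B}}$ has $Z_0K\in\class{W}_{\cat{G}}$, then $K$ is contractible. Now $Z_0K\in\class{W}_{\cat{G}}\cap\class{GI}_{\class{B}}$, which consists of the injectives. Every $Z_nK$ is Gorenstein $\class{B}$-injective and, by thickness of $\class{W}_{\cat{G}}$ and dimension shifting along the injectives, also lies in $\class{W}_{\cat{G}}$. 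Hence every $Z_nK$ is injective, so $K$ is split exact, i.e. contractible, and therefore trivial.

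For the unit, take a fibrant replacement $S^0(M)\rightarrowtail X$ with $X\in\exinj_{\class{B}}$ and trivial cokernel $C$. Since $X$ is exact and $S^0(M)$ is exact away from degree $0$, the long exact homology sequence shows that $C$ has $H_iC=0$ for $i<0$, and also for $i\ge 1$ apart from the degree-$0/1$ bookkeeping. Moreover $C_i = X_i$ is injective, hence in $\class{B}$, for $i>0$. Lemma~\ref{lem-cycles-of-W} then gives $Z_0C\in\class{W}_{\cat{G}}$. The snake lemma yields an exact sequence $0\to M\to Z_0X\to Z_0C \to H_{-1}S^0(M)=0$, so $M\to Z_0X$ is a monomorphism with cokernel in $\class{W}_{\cat{G}}$, hence a weak equivalence.

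The main obstacle is this last step. We must verify the homology hypotheses of Lemma~\ref{lem-cycles-of-W} exactly, in particular that $H_iC=0$ for $i<0$ and that $Z_0$ of the cokernel is computed correctly. We must also handle $H_0C$ carefully; it need not vanish, which is exactly why the lemma only requires vanishing in negative degrees. Once these are in place, the triangulated equivalence of homotopy categories follows from general facts about Quillen equivalences between abelian model structures, since the total derived functors are exact.
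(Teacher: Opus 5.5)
Your proof is correct. The adjunction, the Quillen-adjunction check and the unit condition are exactly as in the paper. The paper also takes $0\to S^0(M)\to X\to C\to 0$ with $X\in\exinj_{\class{B}}$ and $C\in\class{W}$, applies Lemma~\ref{lem-cycles-of-W} to $C$, and reads off $0\to M\to Z_0X\to Z_0C\to 0$ from the snake lemma.

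There is one small slip. Since $X$ is exact, $H_iC\cong H_{i-1}S^0(M)$, so $H_0C=0$ and the only nonzero homology is $H_1C\cong M$. This is harmless, because the lemma only needs vanishing in negative degrees.

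The real difference is in the second half of the criterion.
\begin{itemize}
\item The paper checks the counit condition. For fibrant $X$, the inclusion $S^0(Z_0X)\to X$ has cokernel with $X_i$ in degrees $i\neq 0$ and $X_0/Z_0X$ in degree $0$. This cokernel satisfies the hypotheses of Lemma~\ref{lem-cycles-of-W} and has $0$-cycle $\Ker\bar d_0=0$, so it is trivial.
\item You instead use the other form of Hovey's criterion: $Z_0$ reflects weak equivalences between fibrant objects. This needs more steps.
  \begin{itemize}
  \item A factorization $f=pj$.
  \item A Ken Brown step you left implicit: $Z_0j$ is a weak equivalence, since $j$ is a trivial cofibration between fibrant objects and $Z_0$ is right Quillen.
  \item A remark you also left implicit: the epimorphism $Z_0p$ has kernel $Z_0K\in\class{GI}_{\class{B}}$, so it is a fibration, hence a trivial fibration. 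That is what forces $Z_0K\in\class{W}_{\cat{G}}$.
  \item The identification of $\class{W}_{\cat{G}}\cap\class{GI}_{\class{B}}$ with the injectives, from Theorem~\ref{them-Goren-inj-cot}, together with thickness, to conclude that $K$ is contractible.
  \end{itemize}
\end{itemize}

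All of this is valid. It spares you from computing the cokernel of the counit, but it is longer and relies on the full injectivity of the Gorenstein cotorsion pair. The paper's route gets both halves of the criterion from the single Lemma~\ref{lem-cycles-of-W}.

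Even within your approach the contractibility detour is unnecessary. $K$ is an exact complex of injectives, so Lemma~\ref{lem-cycles-of-W} gives $K\in\class{W}$ if and only if $Z_0K\in\class{W}_{\cat{G}}$ directly.
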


\begin{proof}
That $(S^0, Z_0)$ is an adjoint pair is standard; see~\cite[Lemma~10.1(3)]{gillespie-book}. The left adjoint $S^0$ is clearly  exact and preserves trivial objects. It follows that $S^0$ preserves cofibrations and trivial cofibrations,  so $(S^0, Z_0)$ is indeed a Quillen adjunction~\cite[Prop.~7.7]{gillespie-book}.
To verify that this is a Quillen equivalence we will check the conditions in~\cite[Them~7.17(2)]{gillespie-book}. These conditions involve the unit and counit. We note that the unit  of the adjunction  $(S^0, Z_0)$  is merely the identity map $\eta_M \colon M \xrightarrow{} Z_0S^0(M)$. The counit is the evident inclusion $\epsilon_X \colon S^0(Z_0X) \xrightarrow{} X$.
 Now for an arbitrary (cofibrant) object $M\in\cat{G}$, we find a short exact sequence
\[
0 \xrightarrow{} S^{0}(M) \xrightarrow{} X \xrightarrow{}W \xrightarrow{} 0
\]
 with $X \in \exinj_{\class{B}}$ and $W\in\class{W}$.  By applying the snake lemma (in degrees $0$ and $-1$) we get a short exact sequence
\[
0 \xrightarrow{} M \xrightarrow{} Z_{0}X\xrightarrow{} Z_{0}W \xrightarrow{} 0.
\]
It follows from Lemma~\ref{lem-cycles-of-W} that $Z_{0}W\in\class{W}$, because the $W_{i} \in \class{B}$ are injective for all $i\neq 0$ and $H_{i}W=0$ for all $i\neq 1$.  It follows that $M \rightarrowtail Z_0X$ is a trivial cofibration. This verifies the first condition (involving the morphism $j^{\#}$) in~\cite[Them~7.17(2)]{gillespie-book}. We now turn to the second condition (the one involving the morphism $p^{\flat}$). So let $X \in  \exinj_{\class{B}}$ be an arbitrary fibrant object. Since $Z_0X$ is already cofibrant, the map $p^{\flat}$ is just the evident inclusion  $\epsilon_X \colon S^0(Z_0X) \xrightarrow{} X$ that is the counit of the adjunction. We wish to show that it too is a weak equivalence (trivial cofibration) and for this it suffices to show that its cokernel is in $\class{W}$. But this again follows from Lemma~\ref{lem-cycles-of-W}. Indeed  its cokernel is the complex
$$\cdots \xrightarrow{d_3}X_2  \xrightarrow{d_2}X_1  \xrightarrow{\pi d_1}X_0/Z_0X  \xrightarrow{\bar{d}_0}X_{-1}  \xrightarrow{d_{-1}}X_{-2} \xrightarrow{} \cdots $$
where $\bar{d}_0$ is the unique map satisfying $d_0 = \bar{d}_0\pi$. Since $X$ is an exact complex of injectives, this complex satisfies the hypotheses of Lemma~\ref{lem-cycles-of-W}, and so it is in $\class{W}$ if and only if its 0-cycle is in $\class{W}_{\cat{G}}$ (trivial in the Gorenstein injective model structure on $\cat{G}$). But  the 0-cycle object of the above complex is clearly $\Ker{\bar{d}_0} = 0$, which is certainly in $\class{W}_{\cat{G}}$. This completes the proof that that $p^{\flat}$ is a trivial cofibration, and  that $(S^0, Z_0)$ is a Quillen equivalence.

The final statement follows from general  results; see~\cite[Thems.~7.12 and~7.17]{gillespie-book}.
\end{proof}

\subsection{Remark on our terminology of `$\class{B}$-Tate trivial' objects}~\label{remark-B-tate}
 Given any object $M\in\cat{G}$, we may write a short exact sequence
\[
0 \xrightarrow{} S^{0}(M) \xrightarrow{} X^M \xrightarrow{}W^M \xrightarrow{} 0
\]
 with $X^M \in \exinj_{\class{B}}$ and $W^M\in\class{W}$. As shown in the proof of Proposition~\ref{prop-quillen-equivalence},  by applying $Z_0(-)$ we get a short exact sequence
 \[
0 \xrightarrow{} M \xrightarrow{} Z_0X^M \xrightarrow{} Z_0W^M \xrightarrow{} 0
\]
 with $Z_0X^M \in \class{GI}_{\class{B}}$ and $Z_0W^M\in\class{W}_{\class{G}}$. Given  another object $A\in\cat{G}$, we may define the  \emph{$\class{B}$-Tate cohomology} groups by
 $${}_{\class{B}}\widehat{\Ext}_{\cat{G}}^n(A,M) := H_{-n}[\Hom_{\cat{G}}(A,X^M)].\footnote{The discussion in~\cite[\S9.1]{estrada-gillespie-quillen-duality} gives further details by  describing the connection to~\cite[Def.~7.5]{krause-stable-derived} and  general theory developed in~\cite{gillespie-canonical resolutions}.}$$
Then $A$ is $\class{B}$-Tate trivial in the sense of our Definition~\ref{def-G-inj-B-tate} if and only if we have ${}_{\class{B}}\widehat{\Ext}_{\cat{G}}^n(A,M) = 0$  for all $M$ and for all $n\in \Z$.  The same is  true in the other variable. In particular, if $A$ is $\class{B}$-Tate trivial,  then its corresponding complex $X^A$ in the above short exact sequence must be a contractible complex of injectives. So we again have ${}_{\class{B}}\widehat{\Ext}_{\cat{G}}^n(M,A) = 0$ for all $M$ and for all $n\in \Z$. These observations are in line with Krause's comments in the paragraph before his~\cite[Them.~1.3]{krause-stable-derived}. See also Example~\ref{example-Krause}.

\subsection{Gorenstein injective envelopes}
Here we point out two corollaries to Theorem~\ref{them-Goren-inj-cot}. The first is that every object has a Gorenstein injective envelope.  

\begin{corollary}\label{cor-Goren-inj-cot}
 The Gorenstein $\class{B}$-injective cotorsion pair $(\class{W}_{\cat{G}}, \class{GI}_{\class{B}})$  of Theorem~\ref{them-Goren-inj-cot} is  perfect. That is,  the class $\class{W}_{\cat{G}}$ is covering and the class $\class{GI}_{\class{B}}$ is enveloping.
\end{corollary}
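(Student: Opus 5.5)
To prove perfectness I would use the standard criterion: a complete cotorsion pair $(\class{X},\class{Y})$ in a Grothendieck category is perfect as soon as $\class{X}$ is closed under direct limits. Under this hypothesis $\class{X}$ is covering, because a class closed under direct limits that is special precovering is covering (Enochs, and in Grothendieck categories e.g.\ El Bashir or Xu's book; cf.\ the relevant statements in \cite{gillespie-book}). Wakamatsu's lemma then makes the special precovers minimal, and the envelope statement for $\class{Y}$ follows by the dual Wakamatsu argument for cotorsion pairs with $\class{X}$ closed under direct limits. So the plan has two steps. First, record that $(\class{W}_{\cat{G}}, \class{GI}_{\class{B}})$ is complete; this was established in Theorem~\ref{them-Goren-inj-cot}. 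Second, show that $\class{W}_{\cat{G}}$ is closed under direct limits in $\cat{G}$.

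For the second step I would go back to the chain complex level. By Corollary~\ref{cor-exact-injectives-B-acyclic}, the exact $\class{B}$-acyclic model structure is of the form $\mathfrak{M}_{\widehat{T}}$ obtained by applying Theorem~\ref{them-A-acyclic-model}. Property (1) of that theorem then says its class $\class{W}$ of trivial complexes is closed under direct limits in $\cha{G}$. Now take a directed system $\{M_i\}$ in $\class{W}_{\cat{G}}$. Each $S^0(M_i)$ lies in $\class{W}$. Since $S^0(-)$ is a left adjoint, and direct limits in $\cha{G}$ are computed degreewise, we have $\varinjlim S^0(M_i) \cong S^0(\varinjlim M_i)$. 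Hence $S^0(\varinjlim M_i)\in\class{W}$, so $\varinjlim M_i\in\class{W}_{\cat{G}}$ by the definition of $\class{W}_{\cat{G}}$ in Lemma~\ref{lemma-B-Tate-char}. The same conclusion also follows from the Tate-triviality description: $\Hom_{\cat{G}}(-,X)$ turns direct limits into inverse limits, and the proof of Theorem~\ref{them-A-acyclic-model} already handled that Mittag-Leffler subtlety through property (1). Citing property (1) avoids redoing it.

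The main obstacle is invoking the correct general covering/envelope theorem in the setting of a Grothendieck category rather than a module category. My first choice would be the statement that in a Grothendieck category, a complete cotorsion pair whose left class is closed under direct limits is perfect, referencing \cite{gillespie-book} (the chapter on covers and envelopes) or \cite[Thm.~3.4]{gillespie-ding-modules}-type results. If such a citation is unavailable, I would fall back on Enochs' argument. That argument requires only that $\cat{G}$ has direct limits, that the class is closed under them, and that precovers exist. The last condition holds because completeness provides special precovers from a set-indexed construction, namely the class $\class{W}_{\cat{G}} = \textnormal{Filt}(\class{S}'_{\cat{G}})$ obtained in the proof of the theorem. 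With covers in hand, Wakamatsu's lemma, together with the fact that $\class{W}_{\cat{G}}$ contains the injectives, yields the envelopes on the $\class{GI}_{\class{B}}$ side.
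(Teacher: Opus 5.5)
Your route is the paper's. You take completeness from Theorem~\ref{them-Goren-inj-cot}, show that $\class{W}_{\cat{G}}$ is closed under direct limits, and then apply Enochs' theorem that a complete cotorsion pair whose left class is closed under direct limits is perfect. The paper cites \cite[\S2.2]{xu_flat_covers} for this and notes that it carries over to Grothendieck categories. Your proof of the closure is correct: it uses $S^0(\varinjlim M_i)\cong\varinjlim S^0(M_i)$ together with property~(1) of Theorem~\ref{them-A-acyclic-model} for $\mathfrak{M}_{\widehat{T}}$, and the fact is also recorded in Theorem~\ref{them-Goren-inj-cot}(1). The covering half is also fine, since a precovering class closed under direct limits is covering.

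What is wrong is your explanation of where the envelopes come from. Wakamatsu's lemma and its dual only say that an approximation which is already minimal (a cover, resp.\ an envelope) by an extension-closed class is special. So ``Wakamatsu's lemma makes the special precovers minimal'' has the implication backwards. More importantly, such lemmas cannot produce $\class{GI}_{\class{B}}$-envelopes from $\class{W}_{\cat{G}}$-covers, so the last step of your fallback fails as written.

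The envelope half needs its own Enochs-type argument, which again uses only that $\class{W}_{\cat{G}}$ is closed under extensions and well-ordered direct limits:
\begin{enumerate}
\item A special $\class{GI}_{\class{B}}$-preenvelope $0\to M\to G\to W\to 0$ is a generator for $\Ext(\class{W}_{\cat{G}},M)$ in the sense of \cite[Def.~2.2.1]{xu_flat_covers}.
\item A transfinite construction through direct limits in $\class{W}_{\cat{G}}$ refines this generator to a minimal one, which is a $\class{GI}_{\class{B}}$-envelope \cite[Prop.~2.2.1, Thm.~2.2.2]{xu_flat_covers}.
\end{enumerate}
This is exactly what the paper invokes. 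With it in place of the appeal to Wakamatsu, your argument is complete.
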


\begin{proof}
The class $\class{W}_{\cat{G}}$ is closed under direct limits. So the corollary follows from  a classical argument due to Enochs that can be found in \cite[\S2.2]{xu_flat_covers}. Although the arguments there are given for module categories, the results remain valid for Grothendieck categories (cf. \cite[\S2]{enochs2001}).
In particular, note that any special $\class{GI}_{\class{B}}$-preenvelope of an object $M$ is, in the terminology of \cite[Def.~2.2.1]{xu_flat_covers},  a \emph{generator for $\Ext(\class{W}_{\cat{G}}, M)$}. Then the existence of Gorenstein injective envelopes follows from~\cite[Prop.~2.2.1/Them.~2.2.2]{xu_flat_covers}. Likewise, the existence of $\class{W}_{\cat{G}}$-covers follows from~\cite[Them.~2.2.12]{xu_flat_covers}.
\end{proof}

Now let $\textnormal{St}(\cat{G})$ denote the \emph{injective stable category} of $\cat{G}$. Its objects are  the same  as those of $\cat{G}$.  Its morphism sets are defined by
$$\underline{\Hom}(M,N) := \Hom_{\cat{G}}(M,N)/\sim$$
where $f \sim g$ if and only if $g-f$ factors through an injective object.   

It follows immediately from Theorem~\ref{them-Goren-inj-cot} that $\class{GI}_{\class{B}}$ is a Frobenius category whose projective-injective objects are precisely the usual injectives. Moreover, its stable category identifies with  the homotopy category of the Gorenstein $\class{B}$-injective model structure. Indeed there is a  triangle equivalence
 $$\textnormal{Ho}(\mathfrak{M}) \simeq \textnormal{St}(\class{GI}_{\class{B}})\subseteq \textnormal{St}(\cat{G}).$$
 (Each of the above claims follows from standard results. See~\cite[\S8.1/\S8.2]{gillespie-book}.)
We now have the following result along the lines of~\cite[Corollary~5.13]{cortez-crivei-saorin-reflective}.

\begin{corollary}
$\textnormal{St}(\class{GI}_{\class{B}})\subseteq \textnormal{St}(\cat{G})$ is a reflective subcategory. Indeed, Gorenstein injective approximations (i.e. special preenvelopes)  determine a functor $$R \mathcolon \textnormal{St}(\cat{G}) \xrightarrow{}  \textnormal{St}(\class{GI}_{\class{B}})$$ which  is left adjoint to the inclusion functor $\textnormal{St}(\class{GI}_{\class{B}})\hookrightarrow \textnormal{St}(\cat{G})$.
\end{corollary}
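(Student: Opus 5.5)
We define $R$ on objects by fixing, for each $M \in \cat{G}$, a special $\class{GI}_{\class{B}}$-preenvelope. Theorem~\ref{them-Goren-inj-cot} gives a short exact sequence
$$0 \xrightarrow{} M \xrightarrow{\iota_M} N_M \xrightarrow{} W_M \xrightarrow{} 0$$
with $N_M \in \class{GI}_{\class{B}}$ and $W_M \in \class{W}_{\cat{G}} = \leftperp{\class{GI}_{\class{B}}}$; one could even take envelopes by Corollary~\ref{cor-Goren-inj-cot}, though this is not needed. Set $R(M) := N_M$. The key claim is the following: for every $G \in \class{GI}_{\class{B}}$, precomposition with $\iota_M$ induces a bijection
$$\underline{\Hom}(N_M, G) \xrightarrow{} \underline{\Hom}(M, G).$$
Once this natural bijection is in hand, the standard construction gives $R$ as a functor and exhibits it as a left adjoint. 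For a stable map $[f]\colon M\to M'$, define $R[f]$ as the unique stable class corresponding to $[\iota_{M'} f]$; functoriality and naturality then follow from uniqueness.

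Surjectivity is the easy half. Apply $\Hom_{\cat{G}}(-,G)$ to the sequence. Since $\Ext^1_{\cat{G}}(W_M, G) = 0$, every map $M \to G$ extends along $\iota_M$ to a map $N_M \to G$. We must also check that the bijection is compatible with $\sim$ on the source side. If $g \colon N_M \to G$ factors through an injective, then so does $g\iota_M$, so precomposition is well defined on stable classes.

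Injectivity is the main obstacle. Suppose $g \colon N_M \to G$ is such that $g\iota_M$ factors as $M \xrightarrow{a} I \xrightarrow{b} G$ with $I$ injective. Since $I$ is injective, $a$ extends along the monomorphism $\iota_M$ to some $a' \colon N_M \to I$. Then $g - ba'$ vanishes on $M$, so it factors through the cokernel $W_M$ as $h \colon W_M \to G$. It remains to show that any map $W_M \to G$, with $W_M \in \class{W}_{\cat{G}}$ and $G \in \class{GI}_{\class{B}}$, factors through an injective. For this, use that the Gorenstein $\class{B}$-injective cotorsion pair is injective, so $\class{GI}_{\class{B}}$ has injective covers of the right kind: there is a short exact sequence
$$0 \xrightarrow{} G' \xrightarrow{} E \xrightarrow{} G \xrightarrow{} 0$$
with $E$ injective and $G' \in \class{GI}_{\class{B}}$. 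This comes either from the definition, by taking cycles of the totally acyclic complex, or from Lemma~\ref{lemma-B-cot-pair}. Since $\Ext^1_{\cat{G}}(W_M, G') = 0$, the map $h$ lifts to $E$. Hence $h$, and therefore $g = ba' + h\pi$ (where $\pi \colon N_M \to W_M$), factors through injectives, so $g \sim 0$.

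Finally, we note that the choice of preenvelope is irrelevant up to stable isomorphism, since any two choices are identified by the same bijection. This establishes that $\textnormal{St}(\class{GI}_{\class{B}})$ is reflective in $\textnormal{St}(\cat{G})$, with reflector $R$.
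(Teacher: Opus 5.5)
Your proof is correct, but it takes a different, more hands-on route than the paper. The paper gives no direct argument. It invokes a general proposition from Gillespie's book (Prop.~3.10 there, stated for a quotient category $\cat{A}/\omega$), applied with $\omega := \class{W}_{\cat{G}}\cap\class{GI}_{\class{B}}$, which is exactly the class of injectives. The reflection then comes for free from the complete cotorsion pair $(\class{W}_{\cat{G}},\class{GI}_{\class{B}})$.

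You instead verify directly that $[\iota_M]\colon M\to N_M$ is a reflection:
\begin{itemize}
\item surjectivity follows from $\Ext^1_{\cat{G}}(W_M,G)=0$;
\item injectivity amounts to trivial kernel, since precomposition is additive;
\item trivial kernel follows from two facts: maps $M\to I$ into injectives extend along $\iota_M$, and every map from an object of $\class{W}_{\cat{G}}$ to an object of $\class{GI}_{\class{B}}$ factors through an injective, via $0\to Z_1X\to X_1\to Z_0X\to 0$.
\end{itemize}

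What the citation buys is generality. Nothing in your argument is specific to Gorenstein injectives. It goes through verbatim for any complete cotorsion pair $(\class{C},\class{F})$ with core $\omega=\class{C}\cap\class{F}$. There, maps $\class{C}\to\class{F}$ factor through $\omega$ by taking a special $\class{C}$-precover of the target, whose middle term lies in $\omega$ because $\class{F}$ is closed under extensions. What your version buys is self-containedness and a clear view of exactly which properties are used: completeness, $\Ext$-orthogonality, and core equal to the injectives.

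The one point you leave implicit is that $\textnormal{St}(\class{GI}_{\class{B}})$ really is a full subcategory of $\textnormal{St}(\cat{G})$. This holds because the injectives of $\cat{G}$ lie in $\class{GI}_{\class{B}}$ and are precisely its projective-injective objects. Hence the two stable equivalence relations agree on morphisms between objects of $\class{GI}_{\class{B}}$.
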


\begin{proof}
This follows from~\cite[Prop.~3.10]{gillespie-book}. In particular,  take  $\lambda$ (see~\cite[Setup 3.2.2 on page~63]{gillespie-book} to be $\lambda = \omega := \class{W}_{\cat{G}} \cap \class{GI}_{\class{B}}$, which is exactly the class of injective objects.
\end{proof}

\section{Examples and applications}\label{section-Examples-and-applications}

Now we briefly discuss the key examples that motivated our work, as well as  some unexpected examples and applications.

\subsection{Gorenstein injectives}
By Corollary~\ref{cor-Goren-inj},   for any locally finite dimensional Grothendieck category $\cat{G}$,  we obtain the Gorenstein injective model structure. We also  have  Gorenstein injective envelopes from Corollary~\ref{cor-Goren-inj-cot}. Our  main  new  application  of this  is described in the next example.

\begin{example}\label{qco}
Let $\mathbb{X}$ be a quasi-compact and semi-separated scheme, with $\{U_i\}_{i=1}^N$ a semiseparating open affine covering of $\mathbb{X}$. We let $\qcox$ denote the category of quasi-coherent sheaves on $\mathbb{X}$. Then $\qcox$ admits a family of \emph{very flat} generators, as introduced by Positselski in \cite{positselski-contraherent-cosheaves}, and all such sheaves have finite projective dimension not exceeding $N$.
In more detail, by \cite[Cor.~4.1.2 (b), Cor.~4.1.4 and Lemma B.1.2]{positselski-contraherent-cosheaves}, the pair of classes $(\mathcal{VF}(\mathbb{X}), \mathcal{CA}(\mathbb{X}))$ of very flat and \emph{contraadjusted} quasi-coherent sheaves on $\mathbb{X}$ forms a complete hereditary cotorsion pair on $\qcox$. In particular, $\mathcal{VF}(\mathbb{X})$ contains a set of generators for $\qcox$. Now, by \cite[Lemma~5.5.12(c)]{positselski-contraherent-cosheaves}, together with the equivalent conditions (3) and (4) of \cite[Lemma B.1.9(c)]{positselski-contraherent-cosheaves}, it follows that every very flat quasi-coherent sheaf has finite projective dimension not exceeding $N$.

Therefore,  the Gorenstein injective model structure exists on $\qcox$, and every quasi-coherent sheaf has a Gorenstein injective envelope. We note that the Quillen equivalence of Proposition~\ref{prop-quillen-equivalence} strengthens our previous result  from~\cite[Them.~5.2]{estrada-gillespie-quillen-duality}, which was for semi-separated Noetherian schemes.
\end{example}

\begin{example}\label{example-Krause}
Let $\cat{G}$ be a locally noetherian Grothendieck category whose derived category is compactly generated. In \cite[Them.~7.12]{krause-stable-derived}, Krause shows that  the objects with vanishing Tate cohomology, along  with the Gorenstein injectives,   form a complete cotorsion pair in $\cat{G}$. In particular, the compact generation implies the existence of Tate trivial generators for $\cat{G}$.
\end{example}

\subsection{Ding injectives, and Gorenstein $\mathbf{FP_n}$-injectives}
The pursuit of Gorenstein injective envelopes of modules over a ring led to a number of interesting variants. These include the Ding injective modules,  the Gorenstein AC-injective modules, and the Gorenstein $\operatorname{FP}_n$-injective modules; see \cite{bravo-gillespie-hovey}, \cite{gillespie-ding-modules}, and \cite{bravo-gillespie-perez-fpn}, and further references therein. It was shown in~\cite{gillespie-iacob-ding-injective-envelopes} that over any ring, envelopes exist for each of these types of modules.
Just  like the case of Gorenstein injectives this can be generalized to Grothendieck categories.

 \begin{example}\label{example-FP_n}
For any Grothendieck category $\cat{G}$,  
 there is a set $\class{S}$ of isomorphism representatives for all objects of type $\operatorname{FP}_n$ in the sense of~\cite{bravo-gillespie-perez-fpn}. (This fact is nicely explained in~\cite[\S2.2]{bravo-odabasi-parra-perez-fpn}.)
Such a set cogenerates the class $\class{B} :=\rightperp{\class{S}}$, of all $\operatorname{FP}_n$-injective objects. So whenever $\cat{G}$ is locally finite dimensional we obtain Gorenstein $\operatorname{FP}_n$-injective envelopes and model structures. It is notable that we previously only knew this  in the  case that $\cat{G}$ is, in addition, an  \emph{$n$-coherent} category in the sense of~\cite{bravo-gillespie-perez-fpn}.  (For example,  Gorenstein injectives when $\cat{G}$  is locally noetherian,   Ding injectives when $\cat{G}$ is locally coherent, etc. See  \cite[Remark~6.3]{bravo-gillespie-perez-fpn}.)
\end{example}

\subsection{Strongly FP-injectives, and other examples}
Consider again, properties (1) and (2) stated in Theorem~\ref{them-Goren-inj-cot}. For any  $\class{B} :=\rightperp{\class{S}}$, where $\class{S}$ is a given set,   the Gorenstein $\class{B}$-injective model structure is the best possible injective model structure  that `kills'   the class $\class{B}$.
Here we point out just one interesting example of a class $\class{B}$ for which we were previously unaware that such a construction could be made.

\begin{example}
Following \cite{li-guan-ouyang-strongly-fp-injective}, a module $N$ over a ring $R$ is said to be \emph{strongly FP-injective}  if $\Ext^i_R(F,N) = 0$ for all finitely presented modules $F$ and all $i\geq 1$. A number of authors have studied these modules. For example, see~\cite{bazzoni-hrbek-positselski-fp-projective periodicity, emmanouil-kaperonis-k-absolutely-pure}. Let $\class{SFI}$ denote the class of all strongly FP-injectives. Choose a set $\class{S} = \{\Omega^n(F)\}$ of all syzygies of all the modules in some set of isomorphism representatives for the finitely presented modules. Then one can check that we have  $\rightperp{\class{S}} = \class{SFI}$.  Thus we obtain Gorenstein $\class{SFI}$-injective envelopes and model structures for any ring $R$.
\end{example}

\subsection{Generators for the stable derived category}\label{subsec-stable-derived}
When it comes to the problem of finding Tate trivial generating sets $\class{U}$, it is natural to  consider $\exinj$, the class  of all exact complexes of injectives. After all,  if each $X\in\exinj$ is $\class{U}$-acyclic then so too is each $X\in\exinj_{\class{B}}$, for any class $\class{B}$. Note that if we take $\class{P}$ to be the class of all projective objects, then  $\exinj = \exinj_{\class{P}}$  (even if 0 is the only existing projective). 
 So in the terminology of Definition~\ref{definition-B Tate trivial-generators}, saying that $\class{U}$ is a set of $\class{P}$-Tate trivial generators   is equivalent to saying that each $X\in \exinj$ is $\class{U}$-acyclic. Of course, a set of generators of finite projective dimension is such a set;  see Lemma \ref{lemma-U-acyclicity}. In this way we have the following extension of~\cite[Them.~5.7]{gillespie-models-for-hocats-of-injectives}.

\begin{example}\label{ex-P-tate}
If $\cat{G}$ admits a set $\cat{U}$ of $\class{P}$-Tate trivial generators, then taking $A = \class{U} = \class{B}$ in Theorem~\ref{them-A-acyclic-model} immediately yields $\mathfrak{M}_{\class{U}} = (All, \class{W}, \exinj)$. This is the injective model structure for Krause's stable derived category of $\cat{G}$. The related recollement also holds in the same way as~\cite[Cor.~5.8]{gillespie-models-for-hocats-of-injectives}.
\end{example}

We do not know an example, but it seems feasible for a category to  admit a $\class{P}$-Tate trivial set of generators which are not  of finite projective dimension. For example, if $\cat{G}$ has a closed symmetric monoidal structure $\otimes$, then having a set of $\otimes$-flat generators, along with having cotorsion periodicity (see \cite{bazzoni-cortes-estrada-periodic} for a reference in module categories), might provide such an example. This approach was taken in~\cite[Theorem~4.1]{estrada-gillespie-quillen-duality}, though as explained in Example~\ref{qco},   we do have generators of finite projective dimension for such schemes $\mathbb{X}$ as considered in~\cite[Theorem~4.1]{estrada-gillespie-quillen-duality}.

On the other hand,  Neeman's example~\cite[Example~4.1]{neeman-homotopy category of injectives} shows that there exist locally noetherian Grothendieck categories  which do not admit a set of $\class{P}$-Tate trivial generators.
In particular, Grothendieck categories need not admit $\class{B}$-Tate trivial generators for every class $\class{B}$. But we don't know whether or not  this is the case  when   $\class{B}$ is   the class of all injectives.  One would like an example similar to Neeman's but for totally acyclic complexes of injectives.

\subsection{PGF and PGF$_{\class{B}}$ modules} We just remark that the accessibility of right cotorsion classes  can also be used to approach the PGF-modules introduced in~\cite{saroch-stovicek-singular-compactness}. Given a ring $R$, a \emph{projectively coresolved Gorenstein flat} $R$-module $M$, or \emph{PGF-module} for short, is a 0-cycle, $M=Z_0X$, of some exact complex $X$ of projective $R$-modules with the property that $I\otimes_R X$ is an exact complex of abelian groups for all injective (right) $R$-modules. Let $T$ be a set of injective (right) $R$-modules as in Corollary~\ref{cor-injectives-accessible}. Set $J := (\oplus T_i) \oplus R_R $, where $\oplus T_i$ is the coproduct of all the $T_i\in T$.  Then a complex $X$ of projectives has the above property if and only if $J \otimes_R X$ is exact. From this it follows immediately from~\cite[Them.~6.1]{bravo-gillespie-hovey} that we have a cofibrantly generated projective model structure
$$\mathfrak{M} = (\class{C}, \class{W}, All)$$ on $\ch$, where the complexes in $\class{C}$ are precisely the exact complexes $X$ of projectives for which $I \otimes_R X$ remains exact for any injective $I$.

More generally, by Corollary~\ref{cor-rightperp-accessible} we get the analogous result   based on any right cotorsion class, $\class{B}:=\rightperp{\class{S}}$, where $\class{S}$ is a set of (right) $R$-modules.  That is, by a \emph{PGF$_{\class{B}}$-module} we mean a module $M=Z_0X$, for some exact complex $X$ of projective $R$-modules with the property that $B\otimes_R X$ is an exact complex  for all $B\in \class{B}$. Then  such complexes $X$ are the cofibrant objects of an abelian model structure analogous to  $\mathfrak{M}$ above.

\section*{Acknowledgements}
This work was initiated during a visit of the first author to Ramapo College. He would like to thank his coauthor James Gillespie for his hospitality and the excellent working conditions provided during his stay.

\providecommand{\bysame}{\leavevmode\hbox to3em{\hrulefill}\thinspace}
\providecommand{\MR}{\relax\ifhmode\unskip\space\fi MR }
\providecommand{\MRhref}[2]{%
  \href{http://www.ams.org/mathscinet-getitem?mr=#1}{#2}
}
\providecommand{\href}[2]{#2}

\end{document}